\documentclass[11pt,leqno]{amsart}
\usepackage{amsmath, amsthm, amssymb,xspace}
\usepackage[breaklinks=true]{hyperref}
\usepackage{amsmath,amscd}
\theoremstyle{plain}
\newtheorem{theorem}{Theorem}[section]
\newtheorem{lemma}{Lemma}[section]
\newtheorem{prop}{Proposition}[section]
\newtheorem{cor}{Corollary}[section]

\theoremstyle{definition}

\newcommand{\bggo}{\mathcal O}

\newcommand{\mf}[1]{\displaystyle{\mathfrak{#1}}}

\newcommand{\comment}[1]{}

\DeclareMathOperator{\spec}{\ensuremath{Spec}}
\DeclareMathOperator{\Supp}{\ensuremath{Supp}}
\DeclareMathOperator{\Gr}{\ensuremath{gr}}

\DeclareMathOperator{\ad}{\ensuremath{ad}}

\DeclareMathOperator{\Sym}{\ensuremath{Sym}}
\DeclareMathOperator{\Ann}{\ensuremath{Ann}}

\begin{document}
\title[completions of Infinitesimal Hecke Algebras ]{Completions of infinitesimal Hecke algebras of $\mf{sl}_2$}

\author{Akaki Tikaradze}
\address{The University of Toledo, Department of Mathematics, Toledo, Ohio, USA}
\email{\tt tikar06@gmail.com}
\begin{abstract}

We relate completions of infinitesimal Hecke algebras of $\mf{sl}_2$
to noncommutative deformations of Kleinian singularities of type $D$ of
Crawley-Boevey and Holland. As a consequence, we show an analogue of
Bernstein's inequality and simplicity of generic maximal primitive
quotients of these algebras. We also establish Skryabin type 
equivalence for these algebras.
\end{abstract}
\maketitle

\section{Introduction}

Given an associative Noetherian  $\mathbb{C}$-algebra $A$ of finite Gelfand-Kirillov
dimension, it is natural to ask
if (generalized) Bernstein's inequality holds: Is it true that
for any finitely generated $A$-module $M$ one has $GK_A(M)\geq \frac{1}{2}GK(A/\Ann(M))?$
(here $GK$ stands for Gelfand-Kirillov dimension). 
We show that this is the case for infinitesimal Hecke algebras
of $\mf{sl}_2.$

Recall that for a reductive Lie algebra $\mf{g}$ and its finite
dimensional representation $V$, Etingof, Gan and Ginzburg \cite{EGG}
defined a family of PBW deformations of the universal enveloping algebra
$\mf{U}(\mf{g} \ltimes V)$ of the semi-direct product Lie algebra
$\mf{g}\ltimes V$, which they called infinitesimal Hecke algebras.

There is a particularly nice family of infinitesimal Hecke algebras for
the pair $\mf{g}=\mf{sl}_2, V=\mathbb{C}^2$. These algebras, denoted by $H_z,$
 depend on a parameter $z$ which is an element of the center of $\mf{U}\mf{sl}_2$. 
 Algebras $H_z$
 were studied by Khare and the author \cite{Kh,KT}. In this
paper, we relate noncommutative deformations of Kleinian singularities of
type $D$ of Crawley-Boevey and Holland (which are spherical subalgebras
of symplectic reflection algebras for $\dim V=2$) \cite{CBH} to
infinitesimal Hecke algebras of $\mf{sl}_2$. Namely, we show that a certain
completion of the central quotient of an infinitesimal Hecke algebra of
$\mf{sl}_2$ is isomorphic to the tensor product of the completed Weyl
algebra in two generators and the completion of an algebra of
Crawley-Boevey and Holland. As a consequence, we establish Bernstein's inequality for
these algebras  (Theorem \ref{Bern}.) We also show an equivalence
between certain subcategory of modules over $H_z$ and the category of modules
over corresponding noncommutative deformations of type $D$ singularities(Theorem\ref{skryabin}).

\section{Completions of almost commutative algebras}
We will always work over the field of complex numbers $\mathbb{C}.$
In this section we will recall some necessary constructions
and fix notations related to a slice algebra construction due to Losev\cite{L1}.

Throughout we will use the following convention: For a commutative algebra
$B$ and a closed point $y\in \spec B, B_{\bar{y}}$ will denote the completion of $B$
with respect to the maximal ideal $m_y$ corresponding to $y.$ 
Also, for a symplecic variety $Y$ and $y\in Y$ we will denote
by $W_{\hbar}(Y_{\bar{y}})$ the completed Weyl algebra, a deformation
 quantization of $\bggo{}(Y)_{\bar{y}},$ where $\hbar$ is the deformation
 parameter.

By an almost commutative algebra we will mean an
associative $\mathbb{C}$-algebra equipped with an ascending filtration
$$\mathbb{C}=A_0 \subset A_1 \subset \cdots A_n \subset \cdots, \qquad
\bigcup_{n \in \mathbb{N}} A_n = A, \qquad A_n A_m \subset A_{n+m},$$

\noindent such that the associated graded algebra is a finitely generated
commutative ring over $\mathbb{C}$. Recall that in this case $\Gr A$
comes equipped with a natural Poisson bracket. The origin of $\spec \Gr A$ will
be denoted by $\{ 0 \}$. 

By a generic subset of an algebraic variety we
will mean Weil generic subset, i.e. a set which is the complement of a countable union of proper
closed subvarieties.

Let $A$ be an almost commutative algebra equipped with a filtration $A_n$
($n\geq 0$) over $\mathbb{C}$.
Let $Y\subset \spec \Gr A=X$ be an algebraic symplectic leaf $\dim Y=2d$,
and let $y\in Y$. We will recall the slice algebra construction of Losev
\cite{L1}.
One starts by completing the Rees algebra $R(A)=\bigoplus_n A_n \hbar^n
\subset A[\hbar]$ with respect to the ideal $p^{-1}(m_y)$, where
$m_y\subset \Gr A$ is the maximal ideal corresponding to $y$, and
$p : R(A) \to R(A) / \hbar = \Gr A$ is the natural projection.
This completed algebra will be denoted by $R(A)_{\bar{y}}$. Then
according to Losev, $R(A)_{\bar{y}}$ is a free
$\mathbb{C}[[\hbar]]$-module and $R(A)_{\bar{y}} /\hbar = (\Gr
A)_{\bar{y}}$, the completion of $\Gr A$ at $m_y$.

According to Kaledin \cite{Ka}, 
$(\Gr A)_{\bar{y}}$ is isomorphic to the completed tensor product 
$\spec {Y}_{\bar{y}}\otimes B$, where $B$ is a complete 
Poisson algebra with the
origin being a symplectic leaf of $\spec B$. As proved by Losev
\cite{L1}, this decomposition can be lifted to $R(A)_{\bar{y}},$ meaning
that there is a free $\mathbb{C}[[\hbar]]$-subalgebra
${A}^{\spadesuit}_y$, a {\it slice algebra}, such that $R(A)_{\bar{y}}$
is identified with the completed tensor product
$W_{\hbar}(Y_{\bar{y}})\otimes_{\mathbb{C}[[\hbar]]} A^{\spadesuit}_y$,
such that   ${A}^{\spadesuit}_y/\hbar = B$. 

If $M$ is a finitely generated left $A$-module, then $M$ can be equipped
with a filtration compatible with the filtration of $A$ such that the
corresponding Rees module $R(M)$ is finitely generated over $R(A)$ (a
good filtration). As before, one defines $R(M)_{\bar{y}}$ as the
completion of $R(M)$ with respect to $p^{-1}(m_y),$ 
so $R(M)_{\bar{y}}=R(M)\otimes_{R(A)}R(A)_{\bar{y}}$
which is a nonzero
finitely generated $R(A)_{\bar{y}}$-module when $y\in \Supp(\Gr M)$.
In which case $R(M)_{\bar{y}}/\hbar=(\Gr M)\otimes_{\Gr A}(\Gr A)_{\bar{y}}=(\Gr M)_{\bar{y}}.$
We will denote by $SS(M)$ the corresponding characteristic variety $Supp(\Gr M)\subset \spec \Gr A.$
Recall the following observation of Losev.

\begin{prop} \label{simple}
Let $I \subset A$ be a two-sided ideal of $A$ such that $\bar{Y}$
is a connected component of  $V(\Gr I),$ where as before $Y$ is a 
symplectic leaf in $\spec \Gr A$. Then for any $y \in Y,$
there exists a nonzero left $A^{\spadesuit}_y$-module which is a 
finitely generated free module
over $\mathbb{C}[[\hbar]].$ 
\end{prop}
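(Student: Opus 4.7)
The natural module to feed into Losev's machinery is the quotient $M = A/I$ with the quotient filtration from $A$. Since $\Gr M = \Gr A/\Gr I$, we have $SS(M) = V(\Gr I)$, and by hypothesis $y \in \bar{Y}$ is contained in $SS(M)$ with $\bar{Y}$ an entire component. My plan is to produce the required $A^{\spadesuit}_y$-module as the ``Kashiwara reduction'' of the completed Rees module $R(M)_{\bar{y}}$.

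First, I would pass to the completion and obtain $R(M)_{\bar{y}}$, a nonzero finitely generated $R(A)_{\bar{y}}$-module, topologically free over $\mathbb{C}[[\hbar]]$. Using Losev's identification $R(A)_{\bar{y}} = W_\hbar(Y_{\bar{y}}) \hat\otimes_{\mathbb{C}[[\hbar]]} A^{\spadesuit}_y$, I would choose a Darboux-type presentation of $W_\hbar(Y_{\bar{y}})$ with ``position'' generators $x_1,\dots,x_d$, and set
\[
  N \;:=\; R(M)_{\bar{y}} \big/ (x_1,\dots,x_d) R(M)_{\bar{y}}.
\]
Because the $x_i$ commute with $1 \otimes A^{\spadesuit}_y$, the quotient $N$ is naturally a left $A^{\spadesuit}_y$-module; it is nonzero because taking the fiber along a Lagrangian is an exact functor in the Kashiwara--Losev equivalence between ``good'' $W_\hbar(Y_{\bar{y}})\hat\otimes A^{\spadesuit}_y$-modules and $A^{\spadesuit}_y$-modules.

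Second, I would verify the finiteness and freeness over $\mathbb{C}[[\hbar]]$ by exploiting the connected-component hypothesis. In Kaledin's decomposition $(\Gr A)_{\bar{y}} = \bggo(Y_{\bar{y}}) \hat\otimes B$, the formal neighborhood of $y$ in $\bar Y$ corresponds to $\spec(Y_{\bar{y}}) \times \{0\}$, and since $\bar Y$ is a full connected component of $V(\Gr I)$, the support of $(\Gr M)_{\bar{y}}$ is contained in this slice. Hence $(\Gr M)_{\bar{y}}$ is killed by some power of the maximal ideal $\mathfrak{m}_B$ at the origin of $\spec B$. Applying the classical Kashiwara reduction to $(\Gr M)_{\bar{y}}$ (which is compatible with the quantum one modulo $\hbar$), the reduction modulo $\hbar$ of $N$ becomes a finitely generated $B$-module supported at $0$, hence a finite-dimensional $\mathbb{C}$-vector space. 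Since $N$ is $\hbar$-adically complete and $\mathbb{C}[[\hbar]]$-flat, Nakayama for the complete local ring $\mathbb{C}[[\hbar]]$ upgrades finite dimensionality of $N/\hbar N$ to finite generation of $N$ over $\mathbb{C}[[\hbar]]$; finite generation plus flatness over the DVR $\mathbb{C}[[\hbar]]$ then yields freeness.

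The main obstacle I anticipate is justifying the Kashiwara-type reduction rigorously: one must check that for our particular $R(M)_{\bar{y}}$ the functor $\tilde M \mapsto \tilde M/(x) \tilde M$ is nonzero and commutes with $\hbar$-reduction in a way that matches the classical Kashiwara lemma. The rest of the argument (support analysis via Kaledin plus Nakayama over $\mathbb{C}[[\hbar]]$) is relatively formal once this equivalence is in place.
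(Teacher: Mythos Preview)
Your proposal is correct in outline but takes a more elaborate route than the paper. The paper does not perform any Kashiwara reduction: it simply observes that $R(A)_{\bar y}/R(I)R(A)_{\bar y}$ is $\hbar$-torsion-free with support $Y_{\bar y}$, and then takes any nonzero \emph{finitely generated $A^{\spadesuit}_y$-submodule} of this quotient (e.g.\ the one generated by a single nonzero element). The support hypothesis forces such a submodule to be supported at the origin of $\spec(A^{\spadesuit}_y/\hbar)=\spec B$, hence finite-dimensional modulo $\hbar$; since it sits inside an $\hbar$-torsion-free module it is itself $\hbar$-torsion-free, and completeness plus Nakayama then give that it is free of finite rank over $\mathbb{C}[[\hbar]]$.

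By contrast, you pass to the Lagrangian \emph{quotient} $N=R(M)_{\bar y}/(x_1,\dots,x_d)R(M)_{\bar y}$. This works, but the two points you flag as the ``main obstacle'' --- that $N\neq 0$ and that $N$ is $\hbar$-flat --- are genuine: neither is automatic from the quotient description and both require invoking the Kashiwara--Losev equivalence. The paper's submodule trick sidesteps this machinery entirely, since nonvanishing is trivial and torsion-freeness is inherited from the ambient module. What your approach buys is compatibility with the categorical equivalence (useful if one later wants functoriality), whereas the paper's approach is the shortest path to the bare existence statement.
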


We recall the proof for the convenience of the reader.

\begin{proof}
We have that $\bar{I} = R(I)R(A)_{\bar{y}}$ is a two-sided ideal of
$R(A)_{\bar{y}}$, such that $R(A)_{\bar{y}}/\bar{I}$ has support
$Y_{\bar{y}}$.
Therefore, any finitely generated $A^{\spadesuit}_y$-submodule of
$R(A)_{\bar{y}}/\bar{I}$ is supported at the origin in $\spec
A^{\spadesuit}_y/\hbar,$ hence is finite-dimensional.
\end{proof}

For a finitely generated module $M$ over an almost commutative algebra$A$, we will denote its
Gelfand-Kirillov dimension by $GK(M)$, as usual. Recall that in the above
setting $GK(M)=\dim SS(M).$

\section{Completions of infinitesimal Hecke algebras of $\mf{sl}_2$}

We will denote by $\Delta$ the rescaled Casimir element $\Delta = h^2 +
4fe + 2h \in \mf{U}\mf{sl}_2$, where $h, e, f$ denote the standard basis
elements for $\mf{sl}_2$.
For a given $z'\in \mathbb{C}[\Delta]$, the algebra $H_{z'}$ is the
quotient $\mf{U}\mf{g}\ltimes TV/([x, y]-z'),$ where $\mf{g}=\mf{sl}_2$
and $V=\mathbb{C}x\oplus \mathbb{C}y$ is its standard 2-dimensional
representation with relations
$$[e, x]=0, \qquad [f, x]=y, \qquad [h,x]=x.$$

It was shown in \cite{KT} that center of $H_{z'}$ is generated by $t_z =
ey^2 + \frac{1}{2} h(xy+yx)-fx^2 + z$, where $z \in \mathbb{C}[\Delta]$
is an element uniquely defined by $z'$ up to adding a constant (but $z'$ is
uniquely determined by $z$). One has
$\deg(z)=\deg(z')+1$, and the leading coefficient of $z$ as a polynomial
in $\Delta$ is $\frac{1}{2\deg(z')+1}$ times the leading coefficient of
$z'$.
Let $U_{z}$ be the quotient $H_{z'}/(t_z).$ 
We will introduce the following filtration on $U_{z}$: 
$$\deg e=\deg f=\deg h=2, \qquad \deg x=\deg y=2n+1,$$

\noindent where $n=\deg z'$ as a polynomial in $\Delta$.
From now on we will assume that $n \geq 2$ and $z$ is monic in $\Delta$.
In what follows for a filtered algebra $A,$ given an element
$a\in A,$ we will denote for simplicity $\Gr a\in \Gr A$ still by $a$
whenever it will not cause a confusion.
It follows easily from PBW property that $\Gr U_{z}$ is the quotient of the polynomial algebra
$\mathbb{C}[e, f, h, x, y]$ by the principal ideal
$(ey^2+hxy-fx^2-(\Delta)^{n+1})$, where $\Delta=h^2+4ef$. 
We will denote this Poisson algebra by $B_n$. Thus, the Poisson bracket on $B_n$
is defined as follows
$$\lbrace e, x\rbrace=0=\lbrace f, y\rbrace, \lbrace f, x\rbrace=y, \lbrace x, y\rbrace=(2n+1){\Delta}^n,$$
$$\lbrace h, e\rbrace=2e, \lbrace h, f\rbrace=-2f, \lbrace e, f\rbrace=f, \lbrace e, y\rbrace=x.$$

We note that $SL_2(\mathbb{C})$ acts naturally on $H_{z'}, U_z$ preserving the corresponding filtration.
This action gives rise to a natural action of $SL_2(\mathbb{C})$  on $B_n$ preserving the Poisson
bracket.

We now describe the symplectic leaves of $\spec B_n$.

\begin{prop}
$B_n$ is a normal integral domain. The singular locus of $\spec B_n$ is
$\lbrace x=y=0\rbrace $, and its smooth locus is symplectic. The symplectic leaves
of $\spec B_n$ are the origin $\{0\}, V(I) \setminus \{ 0 \}$, and its smooth locus
$\spec B_n\setminus \lbrace x=y=0\rbrace.$ 
\footnote{Apoorva Khare has obtained this answer earlier using a
different computation.}
\end{prop}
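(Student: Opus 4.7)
The plan is to establish the four claims in sequence. First I would show $B_n$ is a domain by proving $F := ey^2 + hxy - fx^2 - \Delta^{n+1}$ is irreducible in $\mathbb{C}[e,f,h,x,y]$. Under the weight grading $\deg(e)=\deg(f)=\deg(h)=2$, $\deg(x)=\deg(y)=2n+1$, the polynomial $F$ is homogeneous of degree $4n+4$, so any factor is homogeneous. Reducing modulo $f$ yields $F|_{f=0} = ey^2 + hxy - h^{2n+2}$, which is linear in $e$ with coefficient $y^2$ coprime to the constant term in $\mathbb{C}[h,x,y]$; hence $F|_{f=0}$ is irreducible in $\mathbb{C}[e,h,x,y]$. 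A homogeneous proper factor of $F$ cannot be divisible by $f$ (since $F|_{f=0}\neq 0$) nor restrict at $f=0$ to a nonzero constant (positive degree forbids this), so any nontrivial factorization of $F$ would descend to one of $F|_{f=0}$, a contradiction.

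Next I would identify the singular locus via the Jacobian criterion. The two partials $\partial_x F = hy - 2fx$ and $\partial_y F = 2ey + hx$ give a linear system in $(x,y)$ whose matrix has determinant $-\Delta$. If $\Delta \neq 0$ at a singular point, then $x=y=0$, forcing $F = -\Delta^{n+1}\neq 0$, contradiction. So every singular point of $\spec B_n$ satisfies $x=y=\Delta=0$, and conversely all such points are singular; the singular locus is thus the two-dimensional cone $\{x=y=0\}\subset \spec B_n$. As a hypersurface in $\mathbb{A}^5$, $B_n$ is Cohen--Macaulay (so $S_2$); the singular locus has codimension two (so $R_1$); Serre's criterion yields normality.

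For the third claim let $\pi$ denote the $5 \times 5$ skew matrix of Poisson brackets of the coordinates on $\mathbb{A}^5$. Since $F$ is Poisson central (as the graded symbol of the central element $t_z \in H_{z'}$), $\pi \cdot dF = 0$ and $\pi$ descends to a Poisson structure on $B_n$. A direct computation of the five $4 \times 4$ principal Pfaffians reveals $\mathrm{Pf}(\pi|_{\hat{i}}) = \pm\, \partial_i F$ for each coordinate $i \in \{e,f,h,x,y\}$. Therefore $\mathrm{rank}(\pi_p) = 4$ exactly when $dF(p) \neq 0$, i.e., precisely at smooth points of $V(F)$; at such points $\ker \pi_p = \mathbb{C} \cdot dF(p)$, so the induced skew form on $T_p \spec B_n$ is nondegenerate. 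The smooth locus is a nonempty open subset of the irreducible variety $\spec B_n$, hence connected, and so constitutes a single symplectic leaf.

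Finally I would decompose the singular locus. I would verify that $(x,y,\Delta) \subset B_n$ is a Poisson ideal: $\{x,y\}$ is a scalar multiple of $\Delta^n \in (\Delta)$; $\{x,\Delta\} = -2hx - 4ey$ and $\{y,\Delta\} = 2hy - 4fx$ lie in $(x,y)$; and $\Delta$ is the Casimir of $\mf{sl}_2$, giving $\{e,\Delta\} = \{f,\Delta\} = \{h,\Delta\} = 0$. Hence $V(I) = V(x,y,\Delta)$ is a Poisson subvariety of $\spec B_n$, and the quotient Poisson algebra $B_n/(x,y,\Delta) \cong \mathbb{C}[e,f,h]/(\Delta)$ is precisely the nilpotent cone of $\mf{sl}_2^*$ with its standard Kirillov--Kostant--Souriau bracket, whose symplectic leaves are the origin $\{0\}$ and the regular nilpotent orbit $V(I) \setminus \{0\}$. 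Combined with the leaf of the previous step, this yields the three claimed leaves. The main technical input is the Pfaffian identification in the third step, which crystallizes the principle that in a corank-one Poisson structure on a five-dimensional affine space, the rank drops further precisely at the singular points of the defining Casimir.
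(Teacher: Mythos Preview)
Your strategy is genuinely different from the paper's. The paper does not compute Jacobians or Pfaffians at all: it uses the $SL_2(\mathbb{C})$-action to reduce to a point with $e(a)\neq 0$, and then an explicit substitution $A=e^{-1/2}x$, $C=e^{1/2}y+\tfrac12 hA$ identifies the completed local ring $B_{\bar a}$ with a power series ring tensored with the completed $D_{n+2}$ Kleinian singularity $\mathbb{C}[A,C,\Delta]/(C^2-\Delta(\tfrac14 A^2+\Delta^n))$. Smoothness, symplecticity, and the local leaf structure are then read off from known facts about $D_{n+2}$. This is more conceptual and ties directly into the paper's main theme; your route is more elementary and self-contained, which is a genuine advantage if it goes through.

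However, there is a concrete error in your third step. The identity $\mathrm{Pf}(\pi|_{\hat\imath})=\pm\,\partial_i F$ is simply false here. For instance, deleting the $e$-row and column one gets
\[
\mathrm{Pf}(\pi|_{\hat e})=(4n+2)\,f\Delta^n+y^2,\qquad \partial_e F=y^2-4(n+1)\,f\Delta^n,
\]
and similarly for $\hat f$ and $\hat h$. The ``principle'' you invoke holds for the Jacobian Poisson structure attached to $F$ via a volume form, but the bracket on $\Gr H_{z'}$ is \emph{not} of that type (the coefficient $(2n+1)\Delta^n$ in $\{x,y\}$ already shows the discrepancy). What is true is that $dF$ lies in $\ker\pi$, so on the rank-$4$ locus the Pfaffian vector and $dF$ are proportional; but this does not help you locate where the rank drops. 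Your approach is salvageable: if you actually compute the five Pfaffians as above and solve for their common zero locus, you do find $x=y=\Delta=0$, and then the argument that $\bar\pi$ is nondegenerate on the smooth locus is correct. But as written the proof rests on a false computational claim. A smaller point: in your singular-locus argument you only extract $\Delta=0$ from $\partial_xF=\partial_yF=0$; you then need $\partial_eF=y^2$ and $\partial_fF=-x^2$ (valid once $\Delta=0$ and $n\ge 1$) to conclude $x=y=0$.
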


\begin{proof}
It is easy to see that $B_n$ is a domain.
It is clear that the ideal $I=(x, y, \Delta)$ is a Poisson ideal and
$V(I)$ (the zero locus of $I$) belongs to the singular locus of $\spec B_n$. 
Let us
take $a \in \spec B_n\setminus V(I).$ 
 Let $m_a$ be the
corresponding maximal ideal.
The $m_a$-adic completion of $B_n$ will be denoted
by $B_{\bar{a}}$ for brevity. We will show that $\spec B_{\bar{a}}$  a (formal)
symplectic
variety. Using the action of $SL_2(\mathbb{C})$
on $\spec B_n$ we may assume without loss of generality that $e(a)\neq 0.$ 
It is easily seen that $B_{\bar{a}}$ is isomorphic to the tensor product
of $\mathbb{C}[[e^{-1}h-\frac{h(a)}{e(a)}, e-e(a)]]$ and of the
completion of $\mathbb{C}[A, C, \Delta] / (C^2 - \Delta
(\frac{1}{4}A^2+\Delta^{n}))$ at the point $(A(a), C(a), \Delta(a))$,
where $A = e^{-\frac{1}{2}} x, C = e^{\frac{1}{2}} y + \frac{1}{2} h A$.
The latter is the ring of functions of the Kleinian singularity of
type $D_{n+2},$ which is symplectic outside the origin.
Thus, $\spec B_n\setminus V(I)$
is a symplectic. On the other hand, $V(I)$ considered as a reduced
subvariety of $\spec B_n$ is the nilpotent cone of $\mf{sl}_2.$
Hence $V(I)\setminus\lbrace 0\rbrace$ is a simplectic leaf of $\spec B_n$.
Normality of $B_n$ follows Serre's criterion, since $B_n$ is regular in codimension
1.
\end{proof}

The following is an analogue of Kostant's theorem.

\begin{prop}
The algebra $H_{z}$ is a free module over its center.
\end{prop}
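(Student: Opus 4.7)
The plan is to use the standard lifting technique from the associated graded algebra: reduce the freeness of $H_{z'}$ over its center $\mathbb{C}[t_z]$ to the corresponding statement for $\Gr H_{z'}$ over $\mathbb{C}[\Gr t_z]$, and then establish the latter via a graded Nakayama argument. By the PBW theorem for infinitesimal Hecke algebras, with the filtration fixed above ($e, f, h$ of degree $2$ and $x, y$ of degree $2n+1$), $\Gr H_{z'}$ is the commutative polynomial ring $\mathbb{C}[e, f, h, x, y]$: in each defining relation the right-hand side has strictly smaller filtration degree than the left-hand commutator, so the commutator vanishes in top filtration degree. In this positively graded polynomial ring, the element $P := \Gr t_z$ is a nonzero homogeneous polynomial of degree $4n+4$, equal up to sign to $ey^2 + hxy - fx^2 - \Delta^{n+1}$.

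To show $\mathbb{C}[e, f, h, x, y]$ is free over $\mathbb{C}[P]$, I first observe that $P$ is a non-zero-divisor in the domain $\mathbb{C}[e, f, h, x, y]$, so the module is torsion-free over the PID $\mathbb{C}[P]$. I would then pick a homogeneous $\mathbb{C}$-basis $\{\bar{s}_i\}$ of $\mathbb{C}[e, f, h, x, y]/(P)$ and lift each $\bar{s}_i$ to a homogeneous element $s_i$ of the polynomial ring. The induced graded $\mathbb{C}[P]$-linear map $\bigoplus_i \mathbb{C}[P]\, s_i \to \mathbb{C}[e, f, h, x, y]$ is surjective by graded Nakayama (its cokernel is a graded module with degrees bounded below that vanishes modulo $P$), and it is injective because any nontrivial relation $\sum_i p_i(P)\, s_i = 0$ can, after dividing out the largest common power of $P$, be reduced modulo $P$ to a nontrivial linear dependence among the $\bar{s}_i$, contradicting our choice of basis.

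Finally, I would choose filtered lifts $\tilde{s}_i \in H_{z'}$ of $s_i$ with matching filtration degree and consider the $\mathbb{C}[t_z]$-linear map $\bigoplus_i \mathbb{C}[t_z]\, \tilde{s}_i \to H_{z'}$. Equipping the source with the natural filtration (giving $t_z$ degree $4n+4$ and $\tilde{s}_i$ the degree of $s_i$), this map is filtered and its associated graded is precisely the isomorphism constructed in the previous paragraph; since both filtrations are exhaustive and bounded below, the original map is itself an isomorphism, yielding the desired freeness. I do not foresee a serious obstacle; the most delicate point is verifying that the rescaled filtration on $H_{z'}$ still has the polynomial ring as associated graded (a routine degree count on each defining relation) and that the grading is strictly positive so that graded Nakayama applies cleanly to bounded-below modules.
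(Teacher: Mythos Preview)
Your argument is correct. Both your proof and the paper's reduce first to showing that $\Gr H_{z'}\cong\mathbb{C}[e,f,h,x,y]$ is free over $\mathbb{C}[P]$ with $P=\Gr t_z$, and then lift the freeness back to $H_{z'}$ via the standard filtered-to-graded argument; the difference lies in how the graded freeness is established. You observe directly that in any connected $\mathbb{N}$-graded domain a nonzero homogeneous element $P$ is a non-zero-divisor, and that lifting a homogeneous $\mathbb{C}$-basis of the quotient $\mathbb{C}[e,f,h,x,y]/(P)$ yields a free $\mathbb{C}[P]$-basis by graded Nakayama. The paper instead introduces a \emph{second} grading on $\mathbb{C}[e,f,h,x,y]$ (with $\deg x=\deg y=0$, $\deg e=\deg f=\deg h=1$), under which the leading term of $P$ is $\Delta^{n+1}$, and then invokes Kostant's theorem that $\Sym\mf{sl}_2$ is free over $\mathbb{C}[\Delta]$ to conclude that $\mathbb{C}[x,y]\otimes\Sym\mf{sl}_2$ is free over $\mathbb{C}[\Delta^{n+1}]$, before lifting once more. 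Your route is more elementary and self-contained (for $\mf{sl}_2$ Kostant's theorem is itself just an instance of the graded Nakayama fact you prove), while the paper's detour makes explicit why the proposition deserves to be called an analogue of Kostant's theorem.
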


\begin{proof}
It suffices to check that $\Gr H_{z}$ is a free module over
$\mathbb{C}[\Gr t_z]$.
Let us introduce another filtration of $\Gr H_z$ by setting $\deg x =
\deg y=0$, $\deg e = \deg f = \deg h = 1$. Then under the new filtration,
$\Gr(\Gr t_z) = \Delta^{n+1}$.
But by Kostant's theorem, $\Sym \mf{g}$ is free over $\mathbb{C}[\Delta^{n+1}]$. 
Thus we conclude that $\Gr (\Gr (H_z))=\mathbb{C}[x, y]\otimes \Sym \mf{g}$ is
free over $\Gr (\Gr t_z)$. Hence $\Gr H_{z}$ is free over 
$\mathbb{C}[\Gr t_z]$.
\end{proof}

Recall that for a given  finite group $\Gamma\in SL_2(\mathbb{C}),$ and a
central element of the group algebra $\lambda\in \mathbb{C}[\Gamma]$,
Crawley-Boevey and Holland \cite{CBH} defined an algebra
$\bggo{}^{\lambda}$ as $e(T(V)/([x, y]-\lambda))e$, 
where $e=\frac{1}{|\Gamma|}\sum_{g \in \Gamma}g$
and $x, y$ is the standard basis of $V=\mathbb{C}^2$.

In what follows we will consider square roots of certain non-central
elements. Let us clarify this and fix the appropriate notation.
Let $A$ be an associative $\mathbb{C}$-domain, and let $e\in A$ be an element
such that $\ad(e)$ acts locally nilpotently on $A.$ 
Moreover, assume that $A$ is separated in $(e-1)$-adic topology, i.e. $\cap_n (e-1)^n=0.$
Then we will denote by $A[e^{-\frac{1}{2}}]$ the subalgebra of the completion of $A$
by $(e-1)$ generated over $A$ by $e^{-\frac{1}{2}}=\exp(-\frac{1}{2}\log e),$
where $\log e$ is understood as a power series in $(e-1).$

Also, by $W_1(R)$ we will denote
the Weyl algebra over $R$ with 2 generators: $W_1(R)=R\langle x, y\rangle/([x, y]-1).$

The following result was motivated by Losev's work on completions
of symplectic reflection algebras \cite{L2}.

\begin{theorem}\label{completion}
For any $a\in Y=V(I) \setminus \{ 0 \},$
the algebra $R(U_z)_{\bar{a}}$ is isomorphic to the completed tensor
product $W_{\hbar}(Y_{\bar{a}})\otimes_{\mathbb{C}[[\hbar]]} R(\bggo{}_{\lambda(z)})_{\bar{0}},$ 
where $\bggo{}^{\lambda(z)}$ is the
noncommutative deformation of the Kleinian singularity of type $D_{n+2}$ 
(parameter $\lambda(z)$ will be determined in the proof). For generic $z$, the algebra $U_z$ is simple.
\end{theorem}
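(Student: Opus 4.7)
The plan is to apply Losev's slice algebra construction from Section 2 to the almost commutative algebra $U_z$ at a point $a$ of the symplectic leaf $Y=V(I)\setminus\{0\}$. This construction automatically produces a decomposition
$$R(U_z)_{\bar{a}} \;\cong\; W_{\hbar}(Y_{\bar{a}}) \otimes_{\mathbb{C}[[\hbar]]} (U_z)^{\spadesuit}_a,$$
so the first half of the theorem reduces to identifying the slice algebra $(U_z)^{\spadesuit}_a$ with the Rees-completion $R(\bggo{}^{\lambda(z)})_{\bar{0}}$ of a Crawley--Boevey--Holland algebra associated to the binary dihedral group $\Gamma\subset SL_2(\mathbb{C})$ corresponding to $D_{n+2}$, and to reading off the parameter $\lambda(z)$.

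To carry this out I would use the description of the transverse slice already obtained in Proposition 3.1: after absorbing the two-variable factor $\mathbb{C}[[e^{-1}h-\tfrac{h(a)}{e(a)},\,e-e(a)]]$ into the Weyl algebra direction, the transverse direction to $Y$ at $a$ is precisely the formal Kleinian singularity of type $D_{n+2}$ at the origin. Consequently $(U_z)^{\spadesuit}_a$ is a filtered formal deformation quantization of this singularity, equivariant for the stabilizer of $a$ in $SL_2(\mathbb{C})$. By the classification of such quantizations due to Crawley--Boevey and Holland, every one of them is isomorphic to the $\hbar$-adic completion $R(\bggo{}^{\lambda})_{\bar{0}}$ for a unique $\lambda$ in the centre of $\mathbb{C}[\Gamma]$; this yields the required decomposition and defines $\lambda(z)$. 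To make $\lambda(z)$ explicit one matches a small set of invariants of the two quantizations, for example the action on a cyclic module or the values of central characters corresponding to $a$; the resulting assignment $z\mapsto\lambda(z)$ is algebraic in the coefficients of $z$.

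For the simplicity statement, let $J\subset U_z$ be a nonzero proper two-sided ideal equipped with a good filtration, so that $V(\Gr J)$ is a proper nonempty Poisson subvariety of $\spec B_n$. By Proposition 3.1 it is contained in $V(I)$ and is a union of closures of symplectic leaves, hence equals either $\bar{Y}=V(I)$ or $\{0\}$. In the first case, Proposition 2.1 produces a nonzero $(U_z)^{\spadesuit}_a$-module which is finitely generated and free over $\mathbb{C}[[\hbar]]$; via the isomorphism of the first half and specialization at $\hbar=1$, this descends to a finite-dimensional representation of $\bggo{}^{\lambda(z)}$. But Crawley--Boevey and Holland show that $\bggo{}^{\lambda}$ admits no finite-dimensional representations for $\lambda$ outside a countable union of hypersurfaces in the parameter space, and the algebraic map $z\mapsto\lambda(z)$ sends generic $z$ into this open locus, a contradiction. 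The remaining case $V(\Gr J)=\{0\}$ forces $U_z/J$ to be finite-dimensional, which is ruled out for generic $z$ by an analogous genericity argument applied to the classification of finite-dimensional representations from \cite{KT}.

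The hard part is the second paragraph: identifying the slice algebra with the completed CBH algebra and, more importantly, pinning down an algebraic formula for $\lambda(z)$ concrete enough to ensure that the preimage under $z\mapsto\lambda(z)$ of the generic locus in the $\lambda$-parameter space is itself generic in the $z$-parameter space. Once this parameter dependence is understood, both halves of the theorem follow from the general slice-algebra machinery of Section 2 together with the known representation theory of the CBH algebras.
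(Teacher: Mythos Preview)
Your approach is genuinely different from the paper's and has a real gap. The paper does not invoke Losev's abstract slice construction to prove the theorem; instead it produces the decomposition by hand. After adjoining $e^{-1/2}$ (well-defined since $\ad(e)$ is locally nilpotent), it exhibits explicit elements $A=e^{-1/2}x$, $C=e^{1/2}y+\tfrac12 hA$, and $\Delta$ which commute with the pair $(e-1,\tfrac12 e^{-1}h)$ generating a copy of the Weyl algebra, and then computes the relations among $A,C,\Delta$ directly. These relations are matched against Levy's presentation $D(Q,\gamma)$ and Boddington's isomorphism $D(q)\cong\bggo{}^\lambda$, yielding an explicit formula $Q(u)=3z'(-u-\tfrac34)-z(-u-\tfrac34)$ and hence an explicit $\lambda(z)$; in particular the image lies in the hyperplane $q(-\tfrac12)=0$ and is generic there. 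The completion statement is then read off from a \emph{global} isomorphism $U_z[e^{-1/2}]\cong W_1(\mathbb{C})[e^{-1/2}]\otimes\bggo{}^{\lambda(z)}$, which is stronger than what the slice machinery alone would give and is reused essentially in Sections~4 and~5.

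Your abstract route runs into two problems. First, the classification you attribute to Crawley--Boevey and Holland is not in \cite{CBH}: they construct the family $\bggo{}^\lambda$ but do not prove that every filtered formal quantization of the $D_{n+2}$ singularity arises this way. Such a statement requires later machinery (Bezrukavnikov--Kaledin, or Losev on quantizations of conical symplectic singularities), and even then one must match the gradings. Second, and more seriously, you yourself flag that determining $\lambda(z)$ concretely enough to control genericity is ``the hard part'' and leave it undone. Without it the simplicity argument is incomplete: you need to know that the map $z\mapsto\lambda(z)$ does not land identically in the union of hyperplanes $\lambda\cdot\alpha=0$ for non-Dynkin roots $\alpha$, and an abstract existence argument gives no handle on this. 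The paper's direct computation settles both points at once.
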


\begin{proof}
Since $SL_2(\mathbb{C})$ acts transitively on $V(I)\setminus\lbrace 0\rbrace,$
we may assume without loss of generality that $a$ is the point with coordinates
$e=1, h=f=0.$
It is straightforward to check that the algebra $U_{z}[e^{-\frac{1}{2}}]$ is
generated by $e^{\frac{1}{2}}, e^{\frac{-1}{2}}$ over $U_z$ subject
to the following relations:
\begin{eqnarray*}
&& [f, e^m] = -mhe^{m-1}+m(m-1)e^{m-1}, \qquad [y,
e^m] = -me^{m-1}xe^{m-1},\\
&& [h, e^m] = 2me^m, \qquad [x, e^m]=0
\end{eqnarray*}

\noindent for $m\in \frac{1}{2}\mathbb{Z}$.
We have that $[\frac{1}{2}e^{-1}h, e-1]=1,$ and both
$\frac{1}{2}e ^{-1}h, e-1$ commute with $A = e^{\frac{-1}{2}}x$,
$C = e^{\frac{1}{2}} y + \frac{1}{2} hA$, $\Delta = h^2 + 4fe + 2h$. Let
us denote the subalgebra of $ U_{z}[e^{-\frac{1}{2}}]$
generated by $A, C, \Delta$ by $U_z'$.
It is easy to see that  $U_{z}[e^{-\frac{1}{2}}] =
\mathbb{C}[\frac{1}{2}e^{-1}h, e-1][e^{-\frac{1}{2}}] \otimes U'_z$.
Direct computation shows that the following relations hold in $U'_z$:
\begin{eqnarray*}
&& [\Delta, C]=\Delta A+(A-C), [A, C]=z'-\frac{1}{2}A^2,\\
&& [\Delta, A]=4C-A, z+\frac{1}{4}\Delta A^2-\frac{1}{2}AC=C^2.
\end{eqnarray*}

Now we will need to recall the explicit relations for $\bggo{}^\lambda$ (noncommutative
deformations of type $D_{n+2}$ singularities).
Recall that Levy \cite{Le} has defined the following algebras
$D(Q,\gamma)$ for a polynomial $Q(t)$, and $\gamma\in \mathbb{C}$, with
generators $u,v,w$, and relations:
$$[u, v]=2w, \qquad [u,w] = −2uv + 2w + \gamma, \qquad [v,w] = v^2 +
P(u)$$ and $$Q(u) + uv^2 + w^2 − 2wv − \gamma v = 0,$$

\noindent where $P$ is the unique polynomial such that
$$Q(-s(s - 1)) - Q(-s(s + 1)) = (s - 1)P(-s(s - 1)) + (s + 1)P(-s(s+
1)).$$

Similarly, Boddington has defined an algebra $D(q)$ depending on a
polynomial $q$, and has showed that $D(q)$ is isomorphic to 
$D(Q, \gamma)$ when $\gamma=−2q(\frac{−1}{2})$ and
$Q(-u+\frac{-1}{4})=[-\sqrt{u}-\frac{1}{2}p(\sqrt{u})],$ where
$p(x)=\frac{-4q(x)q(-x-1)+\gamma^2}{(1+2x)^2}$ and notation
$[f(\sqrt{x})]\in \mathbb{C}[x]$  for a polynomial $f(x)$ means the
following: $f(\sqrt{x})=h(x)+\sqrt{x}[f(\sqrt{x})]$ for unique
polynomials $h(x), [f(\sqrt{x})].$ Boddington \cite{Bo} shows that $D(q)$
is isomorphic to $\bggo{}^{\lambda}$ where $\lambda$ is the tuple
$(\lambda_a, \lambda_b,\lambda_1,\cdots, \lambda_{n-1}, \lambda_c,
\lambda_d)$ such that $q(x)=\prod_{i=0}^{n-1}(x+\mu_i),$ where 
$$\mu_0=\frac{1}{2}\lambda_a-\lambda_b,
\mu_1=\frac{1}{2}(\lambda_a+\lambda_b), \mu_2=\mu_1+\lambda_1,\cdots,
\mu_{n-1}=\mu_1+\lambda_1+\cdots+\lambda_{n-1}+\lambda_c$$

Direct computation shows that our algebra $U'_z$ is isomorphic to $D(Q, 0)$ for
$Q(u)=3z'(-u-\frac{3}{4})-z(-u-\frac{3}{4}).$
This can be seen by putting first $\Delta=-u, C=\frac{1}{2}w, A=-v,$ and
then replacing $u,w$ by $u + \frac{3}{4}$ and $w + \frac{1}{2}$
respectively. (We denote the corresponding parameter by $\lambda(z)$.)
Then we see that for generic $z$, $\lambda(z)$ is the generic parameter
such that $q(\frac{1}{2})=0,$ i.e. $\mu_i=-\frac{1}{2}$ for some $i.$
Thus, $U'_z$ is isomorphic to $\bggo{}^{\lambda(z)},$ and for generic $z,$ $\bggo{}^{\lambda(z)}$ is simple.

To summarize, 
$U_z[e^{-\frac{1}{2}}]\cong W_1(\mathbb{C})[e^{-\frac{1}{2}}]\otimes \bggo{}^{\lambda(z)},$
where the Weyl algebra $W_1(\mathbb{C})$ is defined by the following
generators and relations: $W_1(\mathbb{C})=\mathbb{C}\langle e, \frac{1}{2}e^{-1}h\rangle/([e, \frac{1}{2}e^{-1}h]-1).$ Similarly
we can establish an isomorphism on the level of Rees algebras 
$$R(U_z)[({\hbar}^2e)^{\frac{-1}{2}}]\cong R(W_1(\mathbb{C}))[({\hbar}^2e)^{\frac{-1}{2}}]\otimes_{\mathbb{C}[\hbar]} R(\bggo{}^{\lambda(z)})$$
(recall that $\hbar^2e\in R(U_z)$). 
Since, $R(U_z)_{\bar{a}}$ is complete with respect to $(\hbar^2e-1),$ we have that $({\hbar}^2e)^{\frac{-1}{2}}\in R(U_z)_{\bar{a}}.$ This
and the above isomorphism yields the desired isomorphism 
$R(U_z)_{\bar{a}}\cong W_{\hbar}(Y_{\bar{a}})\otimes_{\mathbb{C}[[\hbar]]} R(\bggo{}_{\lambda(z)})_{\bar{0}}.$

For generic $z$, $U_z$ has no finite-dimensional representations (this was
shown in \cite{KT}). Thus, simplicity of $U_z$ for generic $z$ follows from
Proposition \ref{simple}, since $\bggo{}^{\lambda(z)}$ has no
nontrivial finite-dimensional representations if $\lambda(z)\cdot
\alpha\neq 0$ for all non-Dynkin roots, according to \cite{CBH}.

\end{proof}

\section{Analogue of Bernstein's inequality}
We start by recalling few standard results (whose proofs are
given for the convenience of the reader), which will be used for the proofs
of Theorem \ref{muh}, Theorem \ref{zing}.

The proof of the following Proposition
follows directly a well-known proof of Bernstein's inequality
given by Joseph \cite{GM}.

\begin{prop}\label{Bern}
Let $R$ be a Noetherian ring with finite GK-dimension and
let $M$ be a finitely generated $W_1(R)$-module.
Then for any finite dimensional $\mathbb{C}$-subspace $N\subset M$
\\we have 
$GK_R(RN)\leq GK_{W_1(R)}M+GK_{W_1(\mathbb{C})}(W_1(\mathbb{C})N)-2.$ 
In particular $GK_R(RN)\leq 2(GK_{W_1(R)}(M)-1).$
\end{prop}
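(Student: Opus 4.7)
The plan is to adapt Joseph's proof of Bernstein's inequality (\cite{GM}) to the relative setting over $R$. We fix a finite-dimensional generating subspace $F_1\subset R$ and set $F_k:=F_1^k$, so $\dim F_k\lesssim k^{GK(R)}$. We take the Bernstein filtration $\{E_k\}$ on $W_1(\mathbb{C})$, with $\dim E_k=\binom{k+2}{2}$. Because $R$ and $W_1(\mathbb{C})$ commute inside $W_1(R)$, the products $G_k:=F_kE_k=E_kF_k$ form a filtration of $W_1(R)$. After enlarging $N$ to a finite-dimensional subspace $\tilde N\supset N$ generating $M$ over $W_1(R)$, we set $M_k:=G_k\tilde N$, a good filtration with $\dim M_k\lesssim k^{GK_{W_1(R)}(M)}$. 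Writing $L_k:=F_kN$ and $P_k:=E_kN$, commutation gives $F_kP_k=E_kL_k\subset M_{2k}$.

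The core of the argument is Joseph's injection. In the absolute case, one uses the simplicity of $W_1(\mathbb{C})$ to obtain $E_n\hookrightarrow \Hom_\mathbb{C}(\Gamma_n,\Gamma_{2n})$ for a good filtration $\Gamma_\bullet$ of any nonzero $W_1(\mathbb{C})$-module, via a double induction exploiting $[E_n,E_1]\subset E_{n-1}$. In the relative setting, simplicity of $W_1(\mathbb{C})$ together with its commutation with $R$ forces the map $W_1(\mathbb{C})\to \End_R(M)$ to be injective whenever $M\neq 0$: its kernel would be a nonzero two-sided ideal of $W_1(\mathbb{C})$ annihilating $M$. Joseph's inductive argument then adapts to produce, for $n$ sufficiently large, a dimensional inequality of the shape
\[
\dim L_n\cdot \binom{n+2}{2}\ \le\ C\cdot \dim M_{2n}\cdot \dim P_n,
\]
with $C$ depending only on $\dim N$.

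Substituting the bounds $\dim M_{2n}\lesssim n^{GK_{W_1(R)}(M)}$ and $\dim P_n\lesssim n^{GK_{W_1(\mathbb{C})}(W_1(\mathbb{C})N)}$ and rearranging yields $\dim L_n\lesssim n^{GK_{W_1(R)}(M)+GK_{W_1(\mathbb{C})}(W_1(\mathbb{C})N)-2}$, which gives the first inequality since $\dim L_n$ realises $GK_R(RN)$. For the ``in particular'' statement, observe that $P_n=E_nN\subset G_n\tilde N=M_n$, hence $GK_{W_1(\mathbb{C})}(W_1(\mathbb{C})N)\le GK_{W_1(R)}(M)$, and substitution into the first inequality yields $GK_R(RN)\le 2(GK_{W_1(R)}(M)-1)$. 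The main obstacle is the central dimensional inequality displayed above: it demands a careful relative version of Joseph's double induction, with the $R$-filtration and the Bernstein filtration interleaved via the commutation identity $F_kP_k=E_kL_k$, and with the ``kernel'' of the would-be injection controlled by $E_n\cap\Ann_{W_1(\mathbb{C})}(N)$, which is itself bounded by Bernstein's inequality applied to the $W_1(\mathbb{C})$-module $W_1(\mathbb{C})N$.
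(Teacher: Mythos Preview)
Your overall architecture matches the paper's: both follow Joseph's multiplication--map idea. But the central step you flag as ``the main obstacle'' is indeed unproved in your sketch, and the hints you give for it do not lead to a proof. In particular, the suggestion that the kernel of the would--be injection is governed by $E_n\cap\Ann_{W_1(\mathbb{C})}(N)$ is misdirected: that object is a \emph{left} annihilator in the simple algebra $W_1(\mathbb{C})$ and carries no obvious dimension bound of the required shape. The relevant annihilator lives in $R$, not in $W_1(\mathbb{C})$.

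The paper closes this gap by two simple choices you did not make. First, instead of the product filtration $G_k=E_kF_k$ it uses the convolution filtration $U_n=\sum_{i\le n}A_iR_{n-i}$ (with $A_i$ the Bernstein filtration on $W_1(\mathbb{C})$ and $R_i$ a generating filtration on $R$), and sets $N^n=U_nN$; no enlargement of $N$ to a generating $\tilde N$ is needed. Second, it considers the single map
\[
f_n:\ U_n\longrightarrow \Hom_{\mathbb{C}}(A_nN,\,N^{2n}),\qquad u\mapsto (v\mapsto uv),
\]
and shows that $\ker f_n=\sum_{i\le n}A_i\,(R_{n-i}\cap\Ann_R N)$. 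This is where Joseph's commutator trick is actually used: writing $u=\sum_{i+j\le n}x^iy^j s_{ij}$ with $s_{ij}\in R_{n-i-j}$, iterated commutators with $x,y\in A_1$ isolate each $s_{ij}$ modulo $\Ann_R N$ while preserving the ``kills $A_{\bullet}N$'' property, forcing $s_{ij}\in\Ann_R N$. Once the kernel is identified, the image of $U_n$ in $W_1(\mathbb{C})\otimes(R/\Ann_R N)$ has dimension $\sum_{i\le n}(i{+}1)\dim\bigl(R_{n-i}/(R_{n-i}\cap\Ann_R N)\bigr)\ge\frac{1}{\dim N}\sum_{i\le n}(i{+}1)\dim R_{n-i}N$, and bounding this by $\dim(A_nN)\cdot\dim N^{2n}$ gives exactly the inequality you were aiming for, with no ``double induction'' needed beyond the kernel computation. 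Your deduction of the ``in particular'' clause from the first inequality is correct and matches the paper.
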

\begin{proof}
Let $R_i$ be an ascending filtration of $R$ by finite dimensional
$\mathbb{C}$-spaces, such that $R_0=\mathbb{C}, R_nR_m=R_{n+m}.$
Denote by $A_n$ the $n$-the degree part of the Bernstein filtration
on $W_1(\mathbb{C}),$ so $A_n=\sum_{i+j\leq n} \mathbb{C}x^iy^j.$
Put $U_n=\sum_{i\leq n} A_iR_{n-i}.$ Also put $N^n=U_nN.$
It is easy to check that the kernel of the multiplication map 
$f_n:U_n\to Hom_{\mathbb{C}}(A_nN, N^{2n})$ is 
$\sum A_i(R_{n-i}\cap Ann(N)).$
Thus, $\sum A_i(R_{n-i}/Ann(N))$ injects into $Hom_{\mathbb{C}}(A_nN, N^{2n}).$
So, $$\frac{1}{\dim N}\sum_{i\leq n}(i+1)\dim R_{n-i}N\leq \dim N^{2n}\dim A_nN.$$
Therefore
$$GK_R(RN)\leq GK_{W_1(R)}M+GK_{W_1(\mathbb{C})}(W_1(\mathbb{C})N)-2\leq 2(GK_{W_1(R)}(M)-1).$$

\end{proof}

\begin{cor}\label{ann}
Let $R$ be a Noetherian ring with finite GK-dimension and
let $M$ be a finitely generated $W_1(R)$-module such that $GK_{W_1(R)}(M)=1.$
Then $GK(R/Ann_R(M))=0.$
\end{cor}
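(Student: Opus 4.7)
The plan is to apply Proposition~\ref{Bern} to a suitable generating subspace of $M$ and then reduce the question to a finite-dimensional computation. Since $M$ is a finitely generated $W_1(R)$-module, choose a finite-dimensional $\mathbb{C}$-subspace $N \subset M$ such that $M = W_1(R)N$. Specializing the ``in particular'' inequality of Proposition~\ref{Bern} to this situation gives
$$GK_R(RN) \leq 2(GK_{W_1(R)}(M) - 1) = 0,$$
so the $R$-submodule $RN$ of $M$ is finite-dimensional over $\mathbb{C}$.

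The next step is to identify $\Ann_R(M)$ with $\Ann_R(RN)$. The inclusion $\Ann_R(M) \subseteq \Ann_R(RN)$ is immediate. For the reverse, the key structural feature is that in $W_1(R) = R\langle x, y\rangle / ([x,y]-1)$ the subalgebras $R$ and $W_1(\mathbb{C})$ commute. Consequently every $m \in M$ can be written as a finite sum $\sum w_i r_i n_i$ with $w_i \in W_1(\mathbb{C})$, $r_i \in R$, $n_i \in N$, and for $r \in \Ann_R(RN)$ one computes $r m = \sum w_i \cdot r(r_i n_i) = 0$ because each $r_i n_i$ lies in $RN$.

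With these two ingredients in hand the conclusion is immediate: the action of $R/\Ann_R(RN)$ on the finite-dimensional vector space $RN$ is faithful, so $R/\Ann_R(M) = R/\Ann_R(RN)$ embeds into $\End_{\mathbb{C}}(RN)$, which is finite-dimensional, and hence $GK(R/\Ann_R(M)) = 0$. Since the substantive content is carried entirely by Proposition~\ref{Bern}, there is no real obstacle here; the only point requiring mild care is the identification $\Ann_R(M) = \Ann_R(RN)$, which relies precisely on the fact that $R$ and $W_1(\mathbb{C})$ commute inside $W_1(R)$ and would fail for a nontrivial twist.
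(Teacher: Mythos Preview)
Your proof is correct and follows essentially the same approach as the paper's: choose a finite-dimensional generating subspace $N$, apply Proposition~\ref{Bern} to get $GK_R(RN)=0$, and conclude via $\Ann_R(RN)=\Ann_R(M)$. The paper's proof is terser and simply asserts this last equality, whereas you spell out the justification using the commutation of $R$ and $W_1(\mathbb{C})$ inside $W_1(R)$; both arguments are the same in substance.
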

\begin{proof}

Let $N\subset M, \dim N< \infty$ be such that $W_1(R)N=M.$
Then by Proposition \ref{Bern}, $GK_R(RN)=0,$ so $GK(R/Ann_R(RN))=GK(R/Ann_R(M))=0.$
\end{proof}

\begin{lemma}\label{square}

Let $R$ be an affine Noetherian $\mathbb{C}$-algebra, and let $e\in R$ be
an element such that $\ad(e)$ is locally nilpotent on $R.$ Then for
any finitely generated $R$-module $M,$ we have  
$GK_{R[e^{-\frac{1}{2}}]}M[e^{-\frac{1}{2}}]\leq GK_RM.$

\end{lemma}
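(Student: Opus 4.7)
The plan is to factor the extension $R \subseteq R[e^{-\frac{1}{2}}]$ through the intermediate ring $S := R[e^{-1}]$ and to control the Gelfand--Kirillov dimension at each stage.

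First, I would verify that the multiplicatively closed set $\{e^n : n \geq 0\}$ satisfies the Ore conditions in $R$, so that $S = R[e^{-1}]$ is a well-defined Ore localization. This is a standard consequence of the local nilpotence of $\ad(e)$: since $\ad(e)$ is a derivation of $R$, one has the Leibniz-type identity
$$e^n r \;=\; \sum_{i \geq 0} \binom{n}{i} \ad(e)^i(r)\, e^{n-i},$$
which for each fixed $r$ is a finite sum, and direct manipulation produces $r' \in R$ and $m \geq 0$ with $r e^m = e^n r'$ (and similarly on the other side). The resulting Ore localization embeds into the $(e-1)$-adic completion of $R$, so it coincides with the subalgebra generated there by $R$ and $e^{-1}$, matching the paper's definition of $R[e^{-1}]$. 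Applying the classical fact that Ore localization does not raise Gelfand--Kirillov dimension (see, e.g., Krause--Lenagan), we obtain
$$GK_S(M \otimes_R S) \;\leq\; GK_R(M).$$

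Second, I would observe that $R[e^{-\frac{1}{2}}]$ is a finitely generated left $S$-module of rank at most $2$. Indeed $(e^{-\frac{1}{2}})^2 = e^{-1} \in S$; and a direct expansion via the power series $e^{-\frac{1}{2}} = \sum_n c_n (e-1)^n$, together with local nilpotence of $\ad(e)$, shows that $e^{-\frac{1}{2}} r \in S \cdot e^{-\frac{1}{2}}$ for every $r \in R$, so every element of $R[e^{-\frac{1}{2}}]$ can be put in the form $s_0 + s_1\, e^{-\frac{1}{2}}$ with $s_0, s_1 \in S$. The standard invariance of Gelfand--Kirillov dimension under a finite module ring extension then yields
$$GK_{R[e^{-\frac{1}{2}}]}(N) \;=\; GK_S(N)$$
for every finitely generated $R[e^{-\frac{1}{2}}]$-module $N$, and the decomposition above gives $GK_S\bigl(M[e^{-\frac{1}{2}}]\bigr) \leq GK_S\bigl(M \otimes_R S\bigr)$.

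Chaining the two bounds produces
$$GK_{R[e^{-\frac{1}{2}}]}\bigl(M[e^{-\frac{1}{2}}]\bigr) \;=\; GK_S\bigl(M[e^{-\frac{1}{2}}]\bigr) \;\leq\; GK_S\bigl(M \otimes_R S\bigr) \;\leq\; GK_R(M),$$
which is the stated inequality. The main technical obstacle is Step 1, where one must carefully verify the Ore conditions from local nilpotence of $\ad(e)$ and identify the resulting Ore localization with its image inside the $(e-1)$-adic completion; the remaining steps are standard bookkeeping.
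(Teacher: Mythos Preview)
Your argument is correct, and it takes a genuinely different route from the paper's proof. You factor the extension $R\subset R[e^{-1}]\subset R[e^{-\frac{1}{2}}]$ and invoke two off-the-shelf results: (i) Gelfand--Kirillov dimension of a module does not increase under Ore localization, and (ii) it is unchanged when passing to a ring that is a finite module over a subring. Both citations are legitimate here; the Ore condition for $\{e^n\}$ really does follow from local nilpotence of $\ad(e)$, and your identification of the Ore localization with the subalgebra of the $(e-1)$-adic completion is fine once one uses that $R$ is a domain (implicit in the paper's standing hypothesis for the construction of $R[e^{-\frac{1}{2}}]$). Your computation $r\,e^{\frac{1}{2}}\in e^{\frac{1}{2}}S$ is exactly the identity $r\,e^{\frac{1}{2}}=\sum_{k\ge 0}\tfrac{(-1)^k}{k!}\bigl(\tfrac{1}{2}\bigr)_k\,e^{\frac{1}{2}-k}\,\ad(e)^k(r)$, a finite sum, so $R[e^{-\frac{1}{2}}]=S+e^{-\frac{1}{2}}S$ as claimed. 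The only step you leave slightly implicit is why $GK_S(M[e^{-\frac{1}{2}}])\le GK_S(M\otimes_R S)$: this follows because $M[e^{-\frac{1}{2}}]$ is, as an $S$-module, a quotient of $(M\otimes_R S)\oplus (M\otimes_R S)^{\sigma}$ where $\sigma$ is the automorphism ``conjugation by $e^{\frac{1}{2}}$'' of $S$, and twisting by an automorphism does not change GK dimension.

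By contrast, the paper gives a self-contained filtration count. It fixes an $\ad(e)$-stable finite-dimensional generating space $V\subset R$ with $1,e\in V$, takes $L=\mathbb{C}e^{-1}+\mathbb{C}e^{\frac{1}{2}}+V$ as a generating space for $R[e^{-\frac{1}{2}}]$, shows $L^n\subset\sum_{i\le mn}\bigl(e^{-i}V^{mn-i}+e^{\frac{1}{2}}e^{-i}V^{mn-i}\bigr)$ for a fixed $m$, and then bounds $\dim\bigl(\sum_{i\le n}e^{-i}V^{n-i}M_0\bigr)$ by a telescoping estimate against the successive quotients $V^{n-i}M_0/eV^{n-i-1}M_0$. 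The paper's argument is elementary and avoids citing localization or finite-extension lemmas; yours is shorter and more structural, but depends on those standard facts. Either way the key algebraic input is the same commutation identity that pushes $e^{\pm\frac{1}{2}}$ past elements of $R$ at the cost of finitely many powers of $e^{-1}$.
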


\begin{proof}

Without loss of generality we may assume that $M$ is $e$-torsion free.
Let $V\subset R$ be an $\ad(e)$-stable finite dimensional 
generating subspace of $R$ such that $1, e\in V.$
Let $M_0\subset M$ be a subspace such that $RM_0=M, \dim_{\mathbb{C}}M_0<\infty.$ Since
$e^{-i}(eV^{n-i-1})\subset e^{-(i-1)}V^{n-(i-1)},$ we get that
$$\dim_{\mathbb{C}}(\sum_{i\leq n}e^{-i}V^{n-i}M_0)\leq 
\dim_{\mathbb{C}}\sum_{i\leq n}(V^{n-i}M_0/eV^{n-i-1}M_0).$$
Therefore, $\dim_{\mathbb{C}}\sum_{i\leq n}e^{-i}V^{n-i}M_0=O(n^d)$ where $d=GK_RM.$
There exists $m>0,$ such that 
$Ve^{-1}\subset\sum_{i\leq m}e^{-i}V, Ve^{\frac{1}{2}}\subset e^{\frac{1}{2}}\sum_{i\leq m}e^{-i}V.$ 
Put \\$L=\mathbb{C}e^{-1}+\mathbb{C}e^{\frac{1}{2}}+V.$
Then $L$ generates $R[e^{-\frac{1}{2}}]$ and \\
$L^n\subset \sum_{i\leq mn} (e^{-i}V^{mn-i}+e^{\frac{1}{2}}e^{-i}V^{mn-i}).$
Thus we conclude that $\dim L^nM_0=O(n^d),$ so 
$GK_{R[e^{-\frac{1}{2}}]}M[e^{-\frac{1}{2}}]\leq d.$
\end{proof}

\begin{lemma}\label{quotientss}

Let $M$ be a finitely generated $R$-module which can be filtered with 
with  $R$-submodules $M_0\subset\cdots \subset M_n=M,$ such that 
$GK(M_i/M_{i-1})\geq \frac{1}{2} GK(R/Ann(M_i/M_{i-1}))$ for all $i=1,\cdots ,n.$\\
Then $GK(M)\geq \frac{1}{2} GK(R/\Ann(M)).$
\end{lemma}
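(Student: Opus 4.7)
The plan is to reduce the conclusion for $M$ to the hypothesis on the subquotients $M_i/M_{i-1}$ via two standard properties of Gelfand--Kirillov dimension for finitely generated modules over a Noetherian algebra (Noetherianness of $R$ being implicit in our setting): additivity along short exact sequences, and a multiplicative bound on annihilators along a submodule filtration. Throughout I take $M_0=0$, as is the natural reading of the statement.

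First, I would invoke the standard identity $GK(M)=\max(GK(M'),GK(M''))$ for any short exact sequence $0\to M'\to M\to M''\to 0$ of finitely generated $R$-modules. Iterating this along the given filtration immediately yields
\[
GK(M)=\max_{1\le i\le n}GK(M_i/M_{i-1}).
\]

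Next, set $J_i=\Ann(M_i/M_{i-1})$, a two-sided ideal of $R$. Since $J_iM_i\subseteq M_{i-1}$ by definition, a backward induction on $i$ gives $J_1J_2\cdots J_n\cdot M\subseteq M_0=0$, so $R/\Ann(M)$ is a quotient of $R/(J_1\cdots J_n)$. To bound the GK-dimension of the latter I filter $R/(J_1\cdots J_n)$ by the images of the ideals $J_1\cdots J_k$ for $k=0,1,\dots,n$; each successive quotient $J_1\cdots J_{k-1}/J_1\cdots J_k$ is annihilated by $J_k$ and hence is a finitely generated module over $R/J_k$, so it has GK-dimension at most $GK(R/J_k)$. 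A second application of additivity now gives
\[
GK(R/\Ann(M))\le GK(R/(J_1\cdots J_n))\le\max_{1\le k\le n}GK(R/J_k).
\]

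Choosing $k_0$ that achieves the maximum on the right and combining with the hypothesis $GK(M_{k_0}/M_{k_0-1})\ge\tfrac{1}{2}GK(R/J_{k_0})$ closes the argument:
\[
GK(R/\Ann(M))\le GK(R/J_{k_0})\le 2\,GK(M_{k_0}/M_{k_0-1})\le 2\,GK(M).
\]
The only point that requires any real attention is ensuring that every subquotient appearing above is finitely generated, so that additivity of $GK$-dimension applies at each step; this is automatic from Noetherianness of $R$. Everything else is a routine bookkeeping argument built on products of annihilator ideals.
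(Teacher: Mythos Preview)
Your argument is correct and is essentially the paper's own proof, which proceeds by induction on $i$: at each step it uses $\Ann(M_{i-1})\cdot\Ann(M_i/M_{i-1})\subseteq\Ann(M_i)$ together with $GK(R/IJ)\le\max\{GK(R/I),GK(R/J)\}$ and $GK(M_i)=\max\{GK(M_{i-1}),GK(M_i/M_{i-1})\}$, i.e.\ exactly the $n=2$ case of your unrolled product $J_1\cdots J_n$. One small point of care: in your filtration of $R/(J_1\cdots J_n)$, the subquotient $J_1\cdots J_{k-1}/J_1\cdots J_k$ is annihilated by $J_k$ on the \emph{right}, not the left, so either view everything as right $R$-modules (harmless, since the $J_k$ are two-sided and $GK(R/J_k)$ is unambiguous) or filter instead by the images of $J_k\cdots J_n$ to get left $R/J_k$-modules.
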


\begin{proof}
We will proceed by induction on $i$ to show that \\$GK(M_i)\geq \frac{1}{2} GK(R/\Ann(M_i)).$
Put $I=Ann(M_{i-1}), J=Ann(M_i/M_{i-1})$ then $$GK(H/Ann(M_i))\leq GK(R/IJ)\leq Max \lbrace GK(R/I), GK(R/J)\rbrace$$
and $GK(M_i)\geq Max \lbrace GK(M_{i-1}), GK(M_i/M_{i-1})\rbrace.$ \\Thus 
$GK(M_i)\geq \frac{1}{2}GK(R/Ann(M_i)).$
\end{proof}

\begin{theorem}\label{muh}
 For any $z$ and any
finitely generated $U_z$-module $M$, one has $GK(M)\geq \frac{1}{2}
GK(U_z/\Ann(M)).$
\end{theorem}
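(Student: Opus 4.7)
The plan is to reduce via Lemma \ref{quotientss} to the case where $V(\Gr\sqrt{\Ann(M)})$ is an irreducible closed Poisson subvariety of $\spec B_n$; by the earlier classification of symplectic leaves of $\spec B_n$, such a subvariety must equal one of $\{0\}$, $V(I)$, or $\spec B_n$. The reduction is carried out by filtering $M$ through the submodules $M_j = \{m \in M : V(\Gr\Ann(U_z m)) \subseteq Z_j\}$ for the ascending chain of Poisson-closed subsets $Z_1 = \{0\} \subset Z_2 = V(I) \subset Z_3 = \spec B_n$, so that each quotient $M_j/M_{j-1}$ has $V(\Gr\sqrt{\Ann(\cdot)})$ equal to a single leaf closure.

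If $V(\Gr\sqrt{\Ann(M)}) = \{0\}$ the inequality is trivial. If $V(\Gr\sqrt{\Ann(M)}) = V(I)$ then $GK(U_z/\Ann(M)) = 2$ and I need $GK(M) \geq 1$: if $M$ were finite-dimensional then $U_z/\Ann(M) \hookrightarrow \End(M)$ would be finite-dimensional, forcing $V(\Gr\Ann(M)) = \{0\}$, which contradicts $V(\Gr\Ann(M)) \supseteq V(\Gr\sqrt{\Ann(M)}) = V(I)$.

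The substantive case is $V(\Gr\sqrt{\Ann(M)}) = \spec B_n$, where $\sqrt{\Ann(M)} = 0$; since $U_z$ is a domain, $\Ann(M) = 0$, and I need $GK(M) \geq 2$. The prior argument gives $GK(M) \geq 1$, so assume $GK(M) = 1$ for contradiction. A direct computation with the Poisson brackets in $B_n$ shows that the Poisson ideal generated by $e$ is all of $B_n$; in particular the $e$-torsion submodule $M_0 \subset M$ has $\Gr\Ann(M_0)$ a Poisson ideal containing some $e^N$, so $\Gr\Ann(M_0) = B_n$ and $M_0 = 0$. Hence $M \hookrightarrow M[e^{-\frac12}]$ is a nonzero $U_z[e^{-\frac12}]$-module, and Lemma \ref{square} gives $GK_{U_z[e^{-\frac12}]}(M[e^{-\frac12}]) \leq 1$. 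Theorem \ref{completion} identifies $U_z[e^{-\frac12}] \cong W_1(\bggo^{\lambda(z)})[e^{-\frac12}]$; for any nonzero finite-dimensional $N_0 \subset M[e^{-\frac12}]$, comparing growth with respect to nested finite-dimensional generating sets of $W_1(\bggo^{\lambda(z)}) \subset U_z[e^{-\frac12}]$ yields
$$GK_{W_1(\bggo^{\lambda(z)})}(W_1(\bggo^{\lambda(z)}) N_0) \leq GK_{U_z[e^{-\frac12}]}(U_z[e^{-\frac12}] N_0) \leq 1.$$
Corollary \ref{ann} then forces $\bggo^{\lambda(z)}/\Ann_{\bggo^{\lambda(z)}}(W_1(\bggo^{\lambda(z)}) N_0)$ to be finite-dimensional, and for generic $z$ the simplicity of $\bggo^{\lambda(z)}$ (Theorem \ref{completion}) combined with $N_0 \neq 0$ yields a contradiction.

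The main obstacle is extending the argument to arbitrary $z$, where $\bggo^{\lambda(z)}$ may admit nonzero finite-dimensional quotients. Since $\bggo^{\lambda(z)}$ does not lie inside $U_z$ (only inside $U_z[e^{-\frac12}]$), a finite-codimension annihilator in $\bggo^{\lambda(z)}$ does not directly produce a nonzero element of $U_z$ annihilating $M$. I expect this is resolved by exploiting the $SL_2(\mathbb{C})$-transitivity on $Y$ (applying the same argument to localizations attached to other highest-weight vectors in the $SL_2$-orbit) together with a Skryabin-type equivalence between $R(U_z)_{\bar{a}}$-modules with locally finite $R(\bggo^{\lambda(z)})_{\bar{0}}$-action and $R(\bggo^{\lambda(z)})_{\bar{0}}$-modules, converting the uniform local finiteness into a genuine nonzero element of $\Ann_{U_z}(M)$.
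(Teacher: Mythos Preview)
Your proof has a genuine gap that you yourself identify: the argument in the substantive case only yields a contradiction for generic $z$, where $\bggo^{\lambda(z)}$ is simple. Your proposed fix via $SL_2$-transitivity and a Skryabin-type equivalence is more elaborate than what is needed, and is not how the paper proceeds.

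The paper's resolution is direct, and the point you are missing is this: although $\bggo^{\lambda(z)} \not\subset U_z$, certain generators of $\bggo^{\lambda(z)}$ do lie in $U_z$, or can be pushed into $U_z$ by multiplying by a power of $e$. Concretely, $\Delta \in U_z$, and $A^2 = e^{-1}x^2$ satisfies $e^m\phi(e^{-1}x^2) \in \mathbb{C}[e,x] \subset U_z$ for $m$ large and any polynomial $\phi$. Once Corollary~\ref{ann} gives $GK\bigl(\bggo^{\lambda(z)}/\Ann_{\bggo^{\lambda(z)}}(N)\bigr) = 0$, there exist nonzero polynomials $g,\phi$ with $g(\Delta), \phi(e^{-1}x^2)$ in this annihilator; since these commute with $e^{-1/2}$, they yield $g(\Delta)$ and a nonzero $\psi \in \mathbb{C}[e,x]$ inside $\Ann_{U_z}(M'')$, where $M''$ is the $e$-torsion-free quotient of $M$. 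The presence of these two elements already forces $GK(U_z/\Ann(M'')) \le 2$, with no hypothesis on the simplicity of $\bggo^{\lambda(z)}$.

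There is also a structural difference. The paper organizes the case analysis around the characteristic variety $SS(M) = \Supp(\Gr M)$ rather than around $V(\Gr\Ann(M))$. When $SS(M)$ meets the smooth symplectic locus of $\spec B_n$, Gabber's theorem on coisotropy of characteristic varieties gives $GK(M) \ge 2 = \tfrac12 GK(U_z)$ at once. The remaining case is $\{0\} \neq SS(M) \subset V(I)$; here one reduces to $GK(M) = 1$ and, rather than seeking a contradiction, proves $GK(U_z/\Ann(M)) \le 2$ by the explicit-annihilator argument above, handling the $e$-torsion piece by induction on the number of irreducible components of $SS(M)$ together with Lemma~\ref{quotientss}. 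Your reorganization around irreducible $V(\Gr\sqrt{\Ann(M)})$ funnels you into proving that $\Ann(M)=0$ implies $GK(M)\ge 2$, which is exactly where the unnecessary simplicity hypothesis bites.

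A smaller point: your claim that the $e$-torsion submodule $M_0$ vanishes is not quite justified. From $e^N \in \Ann(M_0)$ and the Poisson ideal generated by $e$ being all of $B_n$ you only obtain $V(\Gr\Ann(M_0)) = \{0\}$, i.e.\ $M_0$ is finite-dimensional, not zero. This is easily patched (pass to $M/M_0$; in the domain $U_z$ the product $\Ann(M/M_0)\cdot\Ann(M_0) \subset \Ann(M) = 0$ forces $\Ann(M/M_0)=0$), but it is another place where the paper's inductive organization on components of $SS(M)$ is cleaner.
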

 
\begin{proof}

Recall that $V(I)$ is the singular locus of $\spec \Gr U_z=\spec B_{n},$
where $I=(x, y, \Delta)$.
 Let $M$ be a finitely generated $U_z$-module. 
 If $SS(M)$ (the characteristic variety of $M$) 
intersects with $\spec B_n \setminus V(I)$ (which
is the smooth locus of
$\spec B_n$), then
it follows from Gabber's theorem that 
$SS(M)\cap (\spec B_n \setminus V(I))$ is a coisortropic
subvariety of a symplectic variety $\spec B_n \setminus V(I),$ thus
$GK(M)\geq \frac{1}{2}GK(U_z)$. 
 On the other hand, If $SS(M)=\lbrace 0\rbrace,$ then $\dim M$ is finite, so $GK(U_z/\Ann(M))=0$ and we are done.
Therefore, we may assume that $\{0\}\neq SS(M)\subset V(I).$ 

let $a \in SS(M), a\neq 0.$
We will proceed by the induction on the number of irreducible components
of $SS(M).$ If $SS(M)=V(I)$, then $GK(M)=2$ and there is nothing to prove.
Thus we may assume that $SS(M)$ is a union of finitely many lines in $V(I)$
through the origin, equivalently $GK(M)=1.$ We may assume without loss of generality that $a=(e-1, f, h).$
 In particular,
$e\notin \sqrt{\Ann(\Gr M)}.$ Denote by $M'=\lbrace m\in M: e^nm=0, n>>0\rbrace.$  
Thus, $M'$ is a $U_z$-submodule of $M$ and $M"=M/M'$ is $e$-torsion free $U_z$-module.
Then, $SS(M')\subset SS(M)$ and $a\notin SS(M').$ Hence the number
of irreducible components of $SS(M')$ is less
than that of $SS(M).$ Therefore, by the induction
assumption $GK(U_z/\Ann(M'))\leq 2.$ By Lemma \ref{square}, we have that
$GK_{U_z[e^{-\frac{1}{2}}]}(M"[e^{-\frac{1}{2}}])\leq 1.$ 
Let us denote
$\mathbb{C}[e^{-1}h, e-1]\otimes U'_z=W_1(U'_z)$ by $U"_z.$ As was shown
in the proof of Theorem \ref{completion}, $e\in W_1(\bggo{}^{\lambda(z)}), 
U_z[e^{-1/2}]=W_1(\bggo{}^{\lambda(z)})[e^{-1/2}].$
Let $N\subset M"[e^{-\frac{1}{2}}]$ be a finitely generated $W_1(\bggo{}^{\lambda(z)})$-module
which generates $M"[e^{-\frac{1}{2}}]$ over $U_z[e^{-\frac{1}{2}}].$
Thus, $N[e^{-\frac{1}{2}}]=M"[e^{\frac{-1}{2}}].$ Then (by Lemma \ref{square})
$GK_{W_1(\bggo{}^{\lambda(z)})}(N)\leq GK_{U_z[e^{-\frac{1}{2}}]}(M"[e^{-\frac{1}{2}}])\leq 1.$
Therefore, $GK(\bggo{}^{\lambda(z)}/Ann(N))$=0 by Corollary \ref{ann}.
Then, there is a nonzero polynomial $g,$
such that $g(\Delta)N = 0$.
Similarly, there is a nonzero polynomial $\phi,$ such that
$\phi(e^{-1}x^2) \in Ann(N)$ (recall that $\Delta, e^{-1}x^2\in \bggo{}^{\lambda(z)}$).
 Hence 
$e^m \phi(e^{-1}x^2) \in \Ann(M")$ for all $m$. Choosing $m,$ such that $e^m
\phi(e^{-1}x^2) \in U_z$, we conclude that there is a nonzero
$\psi\in \mathbb{C}[e, x],$ such that $\psi\in \Ann(N).$ 
Since $g(\Delta), \psi$ commute with $e^{-\frac{1}{2}}$ and $M"\subset \sum_ie^{-\frac{i}{2}}N$
we conclude that $g(\Delta), \psi\in Ann_{U_z}M".$ So,
$GK(U_z/\Ann(M"))\leq 2.$ Applying Lemma \ref{quotientss} we are done.
\end{proof}
\begin{theorem}\label{zing}
If $M$ is a finitely generated $H_z$-module, then
$GK_{H_z}(M)\geq \frac{1}{2}GK(H_z/Ann(M))$
\end{theorem}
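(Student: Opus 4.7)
The plan is to reduce Bernstein's inequality for $H_z$ to its version for the central quotients $U_{z-\lambda}=H_z/(t_z-\lambda)$, where Theorem \ref{muh} already applies. The strategy rests on $t_z$ being central and on $H_z$ being a free module over $\mathbb{C}[t_z]$ (the Kostant-type result established earlier).

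First I would split off the $\mathbb{C}[t_z]$-torsion of $M$. Let $M_{\mathrm{tor}}\subseteq M$ be the set of elements killed by some nonzero polynomial in $t_z$. Since $t_z$ is central and $H_z$ is Noetherian, $M_{\mathrm{tor}}$ is a finitely generated $H_z$-submodule, annihilated by a single $0\neq p(t_z)\in\mathbb{C}[t_z]$. Applying Lemma \ref{quotientss} to $0\to M_{\mathrm{tor}}\to M\to M/M_{\mathrm{tor}}\to 0$ reduces the problem to proving Bernstein separately for $M_{\mathrm{tor}}$ and for $\bar M:=M/M_{\mathrm{tor}}$. For $M_{\mathrm{tor}}$, I would factor $p(t_z)=\prod_{j=1}^N(t_z-\lambda_j)$ and take the descending filtration $M_{\mathrm{tor}}\supseteq(t_z-\lambda_1)M_{\mathrm{tor}}\supseteq(t_z-\lambda_1)(t_z-\lambda_2)M_{\mathrm{tor}}\supseteq\cdots\supseteq 0$. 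Each successive quotient is annihilated by some single $t_z-\lambda_j$, hence is a finitely generated module over $U_{z-\lambda_j}$, for which Theorem \ref{muh} gives Bernstein. Since $H_z\twoheadrightarrow U_{z-\lambda_j}$ is a filtered surjection, the GK-dimensions of the subquotient and of its annihilator quotient agree whether computed over $H_z$ or over $U_{z-\lambda_j}$. A second application of Lemma \ref{quotientss} then finishes the torsion case.

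For the torsion-free piece $\bar M$ I would pass to a generic fiber at a central character. For generic $\lambda\in\mathbb{C}$, $t_z-\lambda$ acts as a non-zero-divisor on $\bar M$ and on $H_z/\bar J$ (where $\bar J:=\Ann_{H_z}(\bar M)$ satisfies $\bar J\cap\mathbb{C}[t_z]=0$), and a Hilbert-series computation with a good filtration gives the expected drop of one:
\[
GK_{U_{z-\lambda}}(\bar M/(t_z-\lambda)\bar M)=GK_{H_z}(\bar M)-1,\qquad GK((H_z/\bar J)/(t_z-\lambda))=GK(H_z/\bar J)-1.
\]
Using Grothendieck's generic flatness of $\bar M$ and of $H_z/\bar J$ over $\mathbb{C}[t_z]$, for $\lambda$ outside a proper closed subset of $\mathbb{A}^1$ the natural surjection $(H_z/\bar J)/(t_z-\lambda)\twoheadrightarrow U_{z-\lambda}/\Ann_{U_{z-\lambda}}(\bar M/(t_z-\lambda)\bar M)$ has kernel of strictly smaller GK-dimension, so both sides share GK-dimension $GK(H_z/\bar J)-1$. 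Applying Theorem \ref{muh} to the fiber module and combining yields $GK_{H_z}(\bar M)-1\geq\tfrac12(GK(H_z/\bar J)-1)$, hence $GK_{H_z}(\bar M)\geq\tfrac12 GK(H_z/\bar J)$.

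The main obstacle will be the annihilator comparison in the last step: showing that for generic $\lambda$ the action of $H_z/\bar J$ on the fiber $\bar M/(t_z-\lambda)\bar M$ remains essentially faithful, so that fiber reduction loses only one GK-dimension on both sides simultaneously. This requires generic flatness and a careful choice of good filtrations so that $\Gr(t_z-\lambda)$ is regular on $\Gr\bar M$ and on $\Gr(H_z/\bar J)$, allowing the Hilbert-series drop to transfer cleanly to $GK$.
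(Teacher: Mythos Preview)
Your reduction to the torsion and torsion-free parts of $M$ over $\mathbb{C}[t_z]$, together with the use of Lemma~\ref{quotientss} and Theorem~\ref{muh} on each subquotient of the torsion piece, matches the paper's argument exactly.

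The genuine divergence is in how you handle the torsion-free part. You specialize to a generic \emph{closed} point $\lambda\in\mathbb{C}$ and then have to compare $\Ann_{U_{z-\lambda}}(\bar M_\lambda)$ with the image of $\bar J=\Ann_{H_z}(\bar M)$. As you note, this annihilator comparison is the crux, and your sketch (generic flatness plus a compatible good filtration) is on the right track but not quite complete; the clean way to finish it is to use the left $H_z$-module embedding $H_z/\bar J\hookrightarrow \bar M^{k}$ coming from a generating set of $\bar M$, observe that its cokernel is a finitely generated $H_z$-module whose $\mathbb{C}[t_z]$-torsion is killed by a single nonzero polynomial, and conclude that for $\lambda$ avoiding those roots the sequence stays exact after reducing mod $t_z-\lambda$, forcing $(H_z/\bar J)_\lambda\cong U_{z-\lambda}/\Ann(\bar M_\lambda)$ on the nose.

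The paper instead passes to the \emph{generic point}: it base-changes to an algebraic closure $F$ of $\mathbb{C}(t_z)$ and applies Theorem~\ref{muh} over $F$ to $M_F=M(t_z)\otimes_{\mathbb{C}(t_z)}F$. Because $\mathbb{C}(t_z)$ is flat over $\mathbb{C}[t_z]$, one gets $\Ann_{H(t_z)}(M(t_z))=\bar J\otimes\mathbb{C}(t_z)$ for free, so the obstacle you flag simply disappears; the GK drop by one on both sides is then the standard statement $GK_{H}(N)=GK_{H(t)}(N(t))+1$ for $\mathbb{C}[t]$-torsion-free $N$. The cost of the paper's route is that one must know Theorem~\ref{muh} holds over any algebraically closed field of characteristic zero, not just $\mathbb{C}$; the cost of yours is the extra generic-flatness bookkeeping. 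Both are valid, and your version has the minor virtue of never leaving $\mathbb{C}$.
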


\begin{proof}
Throughout we will suppress index $z$ for simplicity. 
Let $M$ be a finitely generated $\mathbb{C}[t]$-torsion free $H$-module.
Let $\rho: \mathbb{C}(t)\to F$ be an embedding into an algebraically
closed filed $F.$ Put $H_F=H\otimes_{\mathbb{C}}F.$
Then $U_F=H_F/(t-\rho(t))=H(t)\otimes _{\mathbb{C}{t}}F$ and $M_F=M(t)\otimes_{\mathbb{C}(t)}F$
is a finitely generated module over  $U_F.$ So, we may
apply Theorem \ref{muh} to $U_F, M_F$ (where instead of $\mathbb{C}$ the ground field
is $F$). Thus, $2GK_{U_F}(M_F)\geq GK_F(U_F/Ann(M_F)).$ But 
$\Ann_{U_F}M_F=Ann_{H(t)}(M(t)\otimes_{\mathbb{C}(t)}F),$ so \\
$U_F/Ann(M_F)=(H(t)/AnnM(t))\otimes_{\mathbb{C}(t)}F.$ Therefore,
$2GK_{H(t)}(M(t))\geq GK(H(t)/Ann(M).$ On the other hand $GK_H(M)-1=GK_{H(t)}(M(t))$
and $GK_{\mathbb{C}(t)}(H(t)/Ann(M))=GK(H/Ann(M))-1.$ \\Thus,
$GK_H(M)\geq \frac{1}{2}GK(H/Ann(M)).$

Let $M$ be an arbitrary finitely generated $H$-modules. Let $M'\subset M$
be the submodule of all $\mathbb{C}[t]$-torsion modules. 
Thus, $M/M'$  is  $\mathbb{C}[t]$-torsion free and $M'$ can be filtered by 
$H$-submodules, such that
each subquotient is annihilated by some $t-\lambda$ for some
$\lambda\in \mathbb{C}.$
Then $M$ can be filtered
by $H$-submodules, such that for each subquotient the conclusion
of the theorem holds. By
Lemma \ref{quotientss} we are done.

\end{proof}
\section{Equivalence}
In this section we establish an equivalence between the category
of (generalized) Whittaker $U_z$-modules and the category of $\bggo{}^{\lambda(z)}$-modules, an analogue
of Skryabin's equivalence \cite{S}.
This equivalence is a direct consequence of the exactness of the Whittaker
functor for $\mf{sl}_2$ (simplest case of Kostant's result) and the isomorphism
$U_z[e^{\frac{-1}{2}}]\cong W_1(\bggo{}^{\lambda(z)}).$

Let us briefly recall the setup of the quantum hamiltonian reduction
of algebras. Let $A$ be an associative $\mathbb{C}$-algebra, and let
$\mf{n}\subset A$ be a finite dimensional nilpotent Lie subalgebra under the commutator bracket of $A.$
Suppose that $\ad(\mf{n})$ action of
$\mf{n}$ on $A$ is locally nilpotent.
Let us denote  $(A/A\mf{n})^{\mf{n}}$ by $\mathbb{H}(\mf{n}, A).$ 
Then $\mathbb{H}(\mf{n}, A)=End_A(A/A\mf{n})^{op}$
is an algebra. The full subcategory of $A$-modules consisting of
those $A$-modules on which $\mf{n}$-acts locally nilpotently (Whittaker modules)
will be denoted by $(A,\mf{n})$-mod.
One has a functor $Wh$ (Whittaker functor) from $(A,\mf{n})$-mod
to the category of $\mathbb{H}(\mf{n}, A)$- modules  defined as follows $Wh_A(M)=M^{\mf{n}}=Hom_A(A/An, M).$ There
is a functor in the opposite direction $F(N)=A/A\mf{n}\otimes_{\mathbb{H}(\mf{n}, A)}N.$
Under these assumptions we have the following standard

\begin{prop}\label{skryabin}
Let $B\subset A$ be a $\mathbb{C}$-subalgebra containing $\mf{n}.$
If $H^i(\mf{n}, B/B\mf{n})=0$ for all $i>0$, then the functor $Wh_A$
induces an equivalence. Moreover, if $A/A\mf{n}$ is a faithfully flat right
$\mathbb{H}(\mf{n}, A)$-module, then the inverse is
given by the functor $F.$

\end{prop}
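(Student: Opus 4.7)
The plan is to realize $F$ and $Wh_A$ as an adjoint pair on the Whittaker category and to show that the cohomology hypothesis forces the unit of adjunction to be an isomorphism (giving the equivalence), while the additional faithful flatness hypothesis forces the counit to be an isomorphism (identifying the inverse with $F$).

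First I would set up the adjunction. The identification $\mathbb{H}(\mf{n},A) = \End_A(A/A\mf{n})^{\mathrm{op}}$ makes $A/A\mf{n}$ into an $(A,\mathbb{H})$-bimodule, and the standard tensor-Hom adjunction yields $\Hom_A(F(N),M) = \Hom_{\mathbb{H}}(N,\Hom_A(A/A\mf{n},M))$. For any Whittaker module $M$ an $A$-linear map $A/A\mf{n}\to M$ is determined by the image of $\bar 1$, which lies in $M^{\mf{n}}$, recovering $(F,Wh_A)$ as an adjoint pair on $(A,\mf{n})$-mod.

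Next I would show the unit $\eta_N: N \to (A/A\mf{n}\otimes_{\mathbb{H}} N)^{\mf{n}}$ is an isomorphism. The Chevalley--Eilenberg complex $V\otimes \Lambda^{\bullet}\mf{n}^*$ computes $H^{\bullet}(\mf{n},V)$; the hypothesis $H^{>0}(\mf{n},B/B\mf{n})=0$ propagates to $H^{>0}(\mf{n},A/A\mf{n}) = 0$ in the intended application, since $A$ is flat as a right $B$-module, yielding $A/A\mf{n}\cong A\otimes_B(B/B\mf{n})$ and hence $H^{>0}(\mf{n},A/A\mf{n})\cong A\otimes_B H^{>0}(\mf{n},B/B\mf{n})=0$. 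Taking a free presentation $\mathbb{H}^{(J)}\to \mathbb{H}^{(I)}\to N\to 0$, applying the right-exact functor $A/A\mf{n}\otimes_{\mathbb{H}}(-)$ followed by the left-exact $(-)^{\mf{n}}$, and using $(A/A\mf{n})^{\mf{n}}=\mathbb{H}$ together with vanishing of $H^1(\mf{n},(A/A\mf{n})^{(I)})$ (cohomology commutes with direct sums since $\mf{n}$ is finite-dimensional) identifies $N$ with $(A/A\mf{n}\otimes_{\mathbb{H}}N)^{\mf{n}}$.

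For the second assertion, under the faithful flatness hypothesis I would show the counit $\epsilon_M:F(Wh_A(M))\to M$ is an isomorphism. The triangle identity gives $Wh_A(\epsilon_M)=\eta_{Wh_A(M)}^{-1}$, so $Wh_A(\epsilon_M)$ is an isomorphism. Now $\mf{n}$ acts on $A/A\mf{n}$ by left multiplication, and since $n\cdot\bar a = \overline{\ad(n)(a)}$, this action is locally nilpotent; consequently $F(Wh_A(M))$, and hence $\ker\epsilon_M$ and $\mathrm{coker}\,\epsilon_M$, lie in $(A,\mf{n})$-mod. Running the long exact sequence in $\mf{n}$-cohomology attached to $0\to \ker\epsilon_M\to F(Wh_A(M))\to \mathrm{image}\to 0$ and $0\to \mathrm{image}\to M\to \mathrm{coker}\,\epsilon_M\to 0$, the vanishing of $H^1(\mf{n},-)$ on objects of the form $F(N)$ (consequence of the previous step) together with faithful flatness of $A/A\mf{n}$ over $\mathbb{H}$, which lets $F$ reflect isomorphisms, forces kernel and cokernel to vanish.

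The main obstacle is the transfer of the cohomology vanishing from the single module $B/B\mf{n}$ to all the modules of the form $A/A\mf{n}\otimes_{\mathbb{H}} N$: this requires the flatness of $A$ as a right $B$-module (implicit in the intended applications) combined with a careful use of the Chevalley--Eilenberg resolution, which is exactly the mechanism that lets $\mf{n}$-invariants commute with $\otimes_{\mathbb{H}}$ and hence makes the unit-iso step go through.
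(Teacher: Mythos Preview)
There are two genuine gaps.

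First, showing only that the unit $\eta_N:N\to Wh_A(F(N))$ is an isomorphism does \emph{not} give an equivalence; it only shows that $F$ is fully faithful. You still need the counit $\epsilon_M:F(Wh_A(M))\to M$ to be an isomorphism, and your argument for that uses flatness of $A/A\mf n$ over $\mathbb H(\mf n,A)$, which is precisely the extra hypothesis of the \emph{second} sentence of the proposition. So as written you have not proved the first sentence: that the cohomology hypothesis alone yields an equivalence.

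Second, and more importantly, you have misidentified the role of the subalgebra $B$. You try to pass from $H^{>0}(\mf n,B/B\mf n)=0$ to $H^{>0}(\mf n,A/A\mf n)=0$ by assuming $A$ is flat as a right $B$-module; you yourself flag that this is not among the hypotheses. The paper does not need any such flatness. The point is rather that the single vanishing $H^{>0}(\mf n,B/B\mf n)=0$ propagates to $H^{>0}(\mf n,M)=0$ for \emph{every} object $M$ of $(B,\mf n)\text{-mod}$, by a d\'evissage/descending-induction argument: the full subcategory of $(B,\mf n)\text{-mod}$ on which $H^m(\mf n,-)$ vanishes is closed under quotients, extensions and arbitrary direct sums, and contains $B/B\mf n$; since every nonzero Whittaker $B$-module admits a nonzero quotient of $B/B\mf n$ as a submodule, this subcategory is all of $(B,\mf n)\text{-mod}$. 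Because $(A,\mf n)\text{-mod}\subset(B,\mf n)\text{-mod}$ via restriction, one gets $H^{>0}(\mf n,M)=0$ for every Whittaker $A$-module $M$. This makes $Wh_A=\Hom_A(A/A\mf n,-)$ exact on $(A,\mf n)\text{-mod}$, so $A/A\mf n$ is a projective generator there, and the equivalence follows from the standard endomorphism-ring argument---with no flatness of $A$ over $B$, and without yet assuming anything about $A/A\mf n$ over $\mathbb H(\mf n,A)$. Your ``main obstacle'' paragraph is thus aimed at the wrong target: the transfer mechanism is categorical, not a base-change along $B\to A$.
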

\begin{proof}

At first we check that the functor $Wh_B$ is an exact functor.
Of course this will imply the same about $Wh_A.$  It suffices
to check that for any $B$-module $M$ on which $\mf{n}$ acts
locally nilpotently, one has
$H^i(\mf{n}, M)=0$ for all $i>0.$ 
Let $m$ be a nonegative integer. Assume that for any such $M$ one has $H^i(\mf{n}, M)=0$ for all $i>m.$
This is obviously true for $m=\dim \mf{n}.$
We will proceed by descending on $m.$  
Let $C$ be the full subcategory of all $(B, \mf{n})$-mod  whose objects are $M,$
such that $H^m(\mf{n}, M)=0.$ Clearly $Wh$ is exact on $C.$ 
Also, $C$ is closed under taking quotients,  arbitrary direct sums, extensions and contains $B/B\mf{n}.$
Let $N$ be an object in $(B, \mf{n}).$ Let $N'$ be the sum of all submodules of $N$ that belong to $C.$ Then
$N'$ belongs to $C$ and no nontrivial submodule of $N/N'$ belongs to $C.$ This implies that $N/N'=0,$
otherwise it will contain a nonzero quotient of $B/B\mf{n},$ a contradiction.
Thus, $N$ belongs to $C,$ and $Wh_A$ is an exact functor. This implies that
$A/A\mf{n}$ is a projective generator of $(A, \mf{n})$-mod since $Wh_A=Hom_A(A/A\mf{n}, -).$
Therefore, $(A, \mf{n})$-mod is equivalent to $End_A(A/A\mf{n})^{op}= \mathbb{H}(\mf{n}, A)$
with $Wh_A$ being an equivalence.

 Now suppose
that $A/A\mf{n}$ is a flat right $\mathbb{H}(\mf{n}, A)$-module. Then, $F$ is an exact functor, and
so is $Wh_A(F):\mathbb{H}(\mf{n}, A)$-mod$\to \mathbb{H}(\mf{n}, A)$-mod. 
Clearly $Id$ is a subfunctor of $Wh(F),$ moreover
$Wh(F(\mathbb{H}(\mf{n}, A)))=\mathbb{H}(\mf{n}, A).$ Therefore,  since $Wh(F)$ preserves direct
sums, we get that $Wh(F)=Id.$

\end{proof}

Recall that $\bggo{}^{\lambda}$ denoted the noncommutative deformation of
Kleinian singularity of type $D_{n+2}$ with parameter $\lambda.$

\begin{theorem} 
Let $c$ be a nonzero complex number.
One has $\bggo{}^{\lambda(z)}=(U_z/U_z(e-c))^{e-c}.$
The functor $M\to Wh(M)=\lbrace m\in M: (e-c)m=0\rbrace$
defines an equivalence between the category of $U_z$-modules on which
$(e-c)$ acts locally nilpotently and the category of $\bggo{}^{\lambda(z)}$-modules, 
the inverse functor is given by $N\to F(N)=U_z/U_z(e-c)\otimes_{\bggo{}^{\lambda(z)}}N.$

\end{theorem}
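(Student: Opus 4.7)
The plan is to deduce both assertions from Proposition \ref{skryabin}, applied with $A=U_z$, the one-dimensional abelian Lie subalgebra $\mf{n}=\mathbb{C}(e-c)\subset U_z$, and the intermediate subalgebra $B=\mf{U}\mf{sl}_2$. The blanket hypothesis that $\ad(\mf{n})$ acts locally nilpotently on $A$ is immediate, since $\ad(e-c)=\ad(e)$ and a direct check on the generators of $U_z$ gives $\ad(e)^3(f)=0$, $\ad(e)^2(h)=0$, $\ad(e)(x)=0$, and $\ad(e)^2(y)=0$.

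To obtain the first claim $(U_z/U_z(e-c))^{e-c}\cong\bggo{}^{\lambda(z)}$, I would first observe that expanding $(e-c)^Nu=\sum_k\binom{N}{k}\ad(e)^k(u)(e-c)^{N-k}$ and discarding terms in $U_z(e-c)$ yields $(e-c)^N\bar{u}=\overline{\ad(e)^N(u)}$, so local nilpotence of $\ad(e)$ forces $(e-c)$ to act locally nilpotently on $U_z/U_z(e-c)$. Since $c\neq 0$, the power series $e^{-1/2}=c^{-1/2}\sum_k\binom{-1/2}{k}c^{-k}(e-c)^k$ then acts as a finite operator on every element, so the natural map $U_z/U_z(e-c)\to U_z[e^{-1/2}]/U_z[e^{-1/2}](e-c)$ is an isomorphism of $U_z$-modules. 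The decomposition $U_z[e^{-1/2}]\cong W_1(\mathbb{C})[e^{-1/2}]\otimes\bggo{}^{\lambda(z)}$ established in the proof of Theorem \ref{completion} places $(e-c)$ entirely in the Weyl-algebra factor; modulo $(e-c)$ this becomes $\mathbb{C}[p]\otimes\bggo{}^{\lambda(z)}$ (with $p=\frac{1}{2}e^{-1}h$), on which $(e-c)$ acts as a nonzero scalar multiple of $\partial_p$ on the first factor and trivially on the second, and taking $(e-c)$-invariants produces $\mathbb{C}\cdot 1\otimes\bggo{}^{\lambda(z)}\cong\bggo{}^{\lambda(z)}$.

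For the hypothesis $H^i(\mf{n},B/B\mf{n})=0$ ($i>0$) required by Proposition \ref{skryabin}, the one-dimensionality of $\mf{n}$ reduces matters to surjectivity of $(e-c)$ on $M:=\mf{U}\mf{sl}_2/\mf{U}\mf{sl}_2(e-c)$. This is the $\mf{sl}_2$-case of Kostant's theorem on the Whittaker functor: the direct computation $(e-c)\cdot\overline{h}=-2c\,\bar{1}$ places $\bar{1}$ in the image (using $c\neq 0$), and descending induction on the $f$-degree of the PBW basis $\{f^ih^j\bar{1}\}$ via the identity $(e-c)\cdot\overline{f^i}=i\,\overline{f^{i-1}(h-(i-1))}$ and the analogous recursion for $\overline{h^j}$ then establishes surjectivity on all of $M$.

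Finally, for the inverse functor the same tensor decomposition exhibits $U_z/U_z(e-c)\cong\mathbb{C}[p]\otimes\bggo{}^{\lambda(z)}$ as a free right $\bggo{}^{\lambda(z)}$-module (via multiplication on the second factor), hence faithfully flat; this supplies the second hypothesis of Proposition \ref{skryabin} and ensures $F$ inverts $Wh$. The principal technical obstacle is the middle-paragraph identification: passing to the $e^{-1/2}$-localization without changing the quotient modulo $(e-c)$, and verifying that the $(e-c)$-invariants in the Weyl factor are exactly the constants. Once this is in hand, the rest is formal bookkeeping combined with the Kostant-type exactness on $\mf{sl}_2$.
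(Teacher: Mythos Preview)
Your proposal is correct and follows essentially the same approach as the paper: apply Proposition \ref{skryabin} with $B=\mf{U}\mf{sl}_2$, invoke the Kostant-type vanishing $H^1(\mathbb{C}(e-c),\mf{U}\mf{sl}_2/\mf{U}\mf{sl}_2(e-c))=0$, and use the decomposition $U_z[e^{-1/2}]\cong W_1(\mathbb{C})[e^{-1/2}]\otimes\bggo{}^{\lambda(z)}$ from Theorem \ref{completion} to identify both the invariants $(U_z/U_z(e-c))^{e-c}\cong\bggo{}^{\lambda(z)}$ and the freeness of $U_z/U_z(e-c)\cong\mathbb{C}[p]\otimes\bggo{}^{\lambda(z)}$ as a right $\bggo{}^{\lambda(z)}$-module. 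The only cosmetic difference is that the paper immediately sets $c=1$ and phrases the quotient as $A/A(e^{-1/2}-1)$ (using that $e^{1/2}+1$ is a unit), whereas you keep general $c$ and justify the passage to the localization by arguing that $e^{-1/2}$ acts as a terminating power series on the locally $(e-c)$-nilpotent quotient; your extra details on the $\ad(e)$-nilpotence and the explicit $H^1$ computation simply spell out what the paper calls ``well-known and easy to check.''
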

\begin{proof}
Without loss of generality assume that $c=1.$
It is well-known and easy to check that $H^1(\mathbb{C}e, \mf{U}\mf{sl}_2/\mf{U}\mf{sl_2}(e-1))=0.$
Thus, according to Proposition \ref{skryabin}, it suffices to check that
$\bggo{}^{\lambda(z)}$ is isomorphic to $(U_z/U_z(e-1))^{e-1}$ and $U_z/U_z(e-1)$
is a free right $(U_z/U_z(e-1))^{e-1}$-module.  

It was shown in the proof of Theorem \ref{completion} that $U_z[e^{-\frac{1}{2}}]=A[e^{-\frac{1}{2}}]$, where
$A=W_1(\bggo{}^{\lambda(z)})[e^{-\frac{1}{2}}].$ Thus, $U_z/U_z(e-1)=A/A(e^{-\frac{1}{2}}-1),$ and\\
$(U_z/U_z(e-1))^e=(A/A(e^{-\frac{1}{2}}-1))^e.$ But 
$A/A(e^{-\frac{1}{2}}-1)=\mathbb{C}[e^{-1}h]\otimes \bggo{}^{\lambda(z)}$\\
and $(A/A(e^{-\frac{1}{2}}-1))^e=\bggo{}^{\lambda(z)}$ and we are done.
\end{proof}

\end{document}